\documentclass[a4paper,12pt]{amsart}
\usepackage{geometry}
\geometry{left=2.8cm,right=2.8cm,top=2.5cm,bottom=2.5cm}
\usepackage{euscript,eufrak,verbatim}
\usepackage[psamsfonts]{amssymb}
\usepackage[usenames]{color}
\usepackage{graphicx}
\usepackage[colorlinks,linkcolor=red,anchorcolor=blue,citecolor=blue]{hyperref}
\newtheorem{theorem}{Theorem}[section]
\newtheorem{lemma}[theorem]{Lemma}

\newtheorem{proposition}[theorem]{Proposition}
\newtheorem{corollary}[theorem]{Corollary}

\def\Z{\mathbb{Z}}
\def\Q{\mathbb{Q}}

\def\Zp{\mathbb{Z}_{p}}

\begin{document}

\title[Exponential Riesz bases in  non-Archimedean l. c. a. groups ]{Exponential Riesz bases in non-Archimedean locally compact  Abelian groups}

\date{} %\today}

\author[A. H. FAN]{Aihua Fan}
\address[A. H. FAN]{
1. Wuhan Institute for Math \& AI, Wuhan University, Wuhan 430072, PR China\\
2. LAMFA, UMR 7352 CNRS, University of Picardie, 33 rue Saint Leu, 80039 Amiens, France
}
\email{ai-hua.fan@u-picardie.fr}

\author[S. L. FAN]{Shilei Fan}
\address[S. L. FAN]{
1. School of Mathematics and Statistics, and Hubei Key Lab--Math. Sci., Central China Normal University, Wuhan 430079, China\\
2. Key Laboratory of Nonlinear Analysis \& Applications (Ministry of Education), Central China Normal University, Wuhan 430079, China
}
\email{slfan@ccnu.edu.cn}

\thanks{A.H. FAN  was supported by NSF of China (Grant No. 12231013); S. L. FAN was partially supported by NSFC (grants No. 12331004) and NSF of Xinjiang Uygur Autonomous Region (Grant No. 2024D01A160).}

\begin{abstract}
This paper establishes  two fundamental results on the existence of exponential Riesz basis  
in non-Archimedean locally compact Abelian groups: the existence of Riesz basis of exponentials for all finite unions of balls and the non-existence of such basis  for some  bounded sets. 
\end{abstract}
\subjclass[2020]{Primary 43A75.}
\keywords{non-Archimedean group, Riesz basis}
\maketitle

 \section{Introduction}
 Let \( \mathcal{H} \) be a separable Hilbert space. A sequence of elements \( \{ f_n \}_{n=1}^\infty \subset \mathcal{H} \) is called a \emph{Riesz basis} if 
 \( f_n = T e_n \)  (\( \forall n \geq 1 \))  for some bounded invertible operator \( T: \mathcal{H} \to \mathcal{H} \) and for some   orthonormal basis  \( \{ e_n \}_{n=1}^\infty \) of \( \mathcal{H} \). Equivalently (see, e.g., \cite[Theorem~3.6.6]{Chr2016}), the sequence \( \{ f_n \}_{n=1}^\infty \) is a Riesz basis if it is complete in \( \mathcal{H} \) and there exists a constant \( K > 0 \) such that
\begin{equation}\label{eq:Riesz}
    \frac{1}{K} \sum_{n=1}^\infty |a_n|^2 \leq \left\| \sum_{n=1}^\infty a_n f_n \right\|^2 \leq K \sum_{n=1}^\infty |a_n|^2
\end{equation}
for all square-summable sequences \( \{ a_n \} \subset \mathbb{C} \).

%Let \( H \) be a separable Hilbert space. A sequence \( \{ f_n \}_{n=1}^\infty \subset H \) is called a {\em basis} if every element $f\in H$ can be uniquely represented by a series $\sum_{n=1}^\infty c_nf_n$, and it is called a {\em Riesz basis} if it is the image of an orthonormal basis $\{e_n\}_{n= 1}^\infty$ by a  bounded invertible operator \( T: H \to H \), namely  $f_n = T e_n$ for all  $ n \geq 1$.  Or equivalently, $\{f_n\}_{n=1}^\infty$
%is complete in $H$ and 
%\begin{equation}\label{eq:Riesz}
%    \frac{1}{K} \sum_{n=1}^\infty |a_n|^2 \leq \left\| \sum_{n=1}^\infty a_n f_n \right\|^2 \leq K \sum_{n=1}^\infty |a_n|^2
%    \end{equation}
%    for all square-summable sequences \( \{ c_n \} \). {\color{red} 1. Putting two equivalent definitions  together is confusing for the reader.  Putting them together needs a reference for justifying the equivalence. 2. The definition of basis is not needed here. The reader would ask what is the difference between `basis' and `Riesz basis'.}
%    \medskip
    
 % \textcolor{red}{
   Let \( G \) be a locally compact abelian (l.c.a.) group with dual group \( \widehat{G} \). Elements of \( \widehat{G} \),   called \emph{characters}, are continuous group homomorphisms from \( G \) to the unit circle \( \mathbb{T}
  :=\{z: |z|=1\} \subset \mathbb{C} \). For the classical Euclidean additive group \( G = \mathbb{R}^d \), the dual group is naturally identified with \( \mathbb{R}^d \) and the characters take the form of complex exponentials
\[
e_\lambda(x) = e^{2\pi i \lambda \cdot x}, \quad \lambda \in \mathbb{R}^d.
\]
Thus, in this case, characters are nothing but exponential functions. %Motivated by this identification, 
We shall also refer to characters of a general l.c.a.\ group \( G \) as \emph{exponentials}. %, even when no explicit exponential formula is available. 
%}

 Let \( S \subset G \) be a Borel set of positive and finite Haar measure of a l.c.a. group $G$. 
Consider the Hilbert space \( \mathcal{H} = L^2(S) \).  People are interested in the existence of structured bases in \( \mathcal{H} \) formed by exponentials. % characters of \( G \).

A natural question is  whether \( L^2(S) \) admits an orthogonal basis of exponentials. This question has been extensively studied in the Euclidean setting \( G = \mathbb{R}^d \), going back to the seminal work of Fuglede~\cite{Fuglede1974}. It is now rather well understood that the existence of exponential orthogonal basis  is related to the geometric structure of $S$. 
% natural domains do \emph{not} admit an orthogonal basis of exponentials. 
For example, Lev and Matolcsi ~\cite{LevMat2022} showed that a convex domain in \( \mathbb{R}^d \) has an orthogonal basis of exponentials if and only if it is a convex polytope that tiles \( \mathbb{R}^d \) by translations. However, classical shapes such as the disk or triangle in the plane fail to support such orthogonal bases; this was already noted in~\cite{Fuglede1974}.

%\textcolor{red}{ 
A more flexible %but still highly structured 
alternative is to seek a \emph{Riesz basis} of exponentials in \( L^2(S) \). This question is also delicate and remains open even for simple domains like the ball or  triangle in \( \mathbb{R}^2 \); see~\cite{KNO2023}. In the context of general l.c.a.\ groups, the problem becomes even more intricate, due to the lack of geometric intuition and the variety of topological and algebraic features that such groups may possess.

The study of exponential Riesz bases for finite unions of intervals in \(\mathbb{R}\) originated from practical applications in sampling theory for band-limited signals. Early progress was made through special cases: Kohlenberg \cite{Koh1953} solved the problem for two equal-length intervals, Bezuglaya and Katsnelson \cite{BK1993} considered intervals with integer endpoints, and Seip \cite{Seip1995} treated the general case of two intervals and certain subcases involving three or more intervals. A complete solution for arbitrary finite unions of intervals in \(\mathbb{R}\) was eventually obtained by Kozma and Nitzan \cite{KN2015}, who established the existence of exponential Riesz bases for such sets. More recently, Kozma, Nitzan, and Olevskii \cite{KNO2023}  proved that there exists a bounded measurable set \( S \subset \mathbb{R} \) of positive and finite measure  excluding Riesz bases of exponentials.

Kozma and Nitzan~\cite{KN2016} extended their earlier results to higher dimensions by proving that any finite union of rectangles in \( \mathbb{R}^d \), with edges parallel with the coordinate axes, admits a Riesz basis of exponentials. Grepstad and Lev~\cite{GL2014} showed that a bounded Borel set \( \Omega \subset \mathbb{R}^d \) admits a Riesz basis of exponentials if it \( k \)-tiles \( \mathbb{R}^d \) by translations along a full-rank lattice \( \Lambda \). Subsequently, Kolountzakis~\cite{Kol2015} provided a simpler and slightly more general proof of this result. Building upon this framework, Agora, Antezana and Cabrelli~\cite{AAC2015} further extended the theory to the setting of general locally compact abelian (l.c.a.) groups. More recently, Debernardi and Lev~\cite{DL2022} proved that any convex and centrally symmetric polytope in \( \mathbb{R}^d \), with centrally symmetric faces, admits a Riesz basis of exponentials without any additional arithmetic assumptions.

\iffalse
satisfies the following equivalent characterizations:
\begin{enumerate}
    \item \textbf{Operator Transformation}: There exists a bounded invertible operator \( T: H \to H \) and an orthonormal basis \( \{ e_n \} \) such that
    \[
    f_n = T e_n \quad \forall n \geq 1.
    \]
    
    \item \textbf{Frame Characterization}: There exists a stability constant \( K \geq 1 \) satisfying the norm equivalence
    \[
    \frac{1}{K} \sum_{n=1}^\infty |c_n|^2 \leq \left\| \sum_{n=1}^\infty c_n f_n \right\|^2 \leq K \sum_{n=1}^\infty |c_n|^2
    \]
    for all square-summable sequences \( \{ c_n \} \).
\end{enumerate}

The study of exponential Riesz bases has profoundly influenced modern harmonic analysis. Two landmark results demonstrate this development:

\begin{itemize}
    \item \textbf{Universal Existence} \cite{KN2015}: Every finite union \( S \subset \mathbb{R} \) of intervals admits a Riesz basis \( \{ e^{2\pi i\lambda x} \}_{\lambda\in\Lambda} \) in \( L^2(S) \).
    \item \textbf{Pathological Nonexistence} \cite{KNO2023}: There exists a bounded measurable set \( S \subset \mathbb{R} \) excluding such bases.
\end{itemize}
\fi

These results reveal the delicate interplay between geometric structure and spectral properties in Euclidean spaces. In this note, we aim to investigate analogous phenomena in the non-Archimedean setting.

 Fan et al   \cite{FFS2016, FFLS2019} proved  that a Borel set of positive and finite Haar measure in the field $\Q_p$ admit an orthogonal  basis  of  characters if and only if it tiles $\Q_p$ by translations. Moreover, it  is proved that such set are compact open  and  have $p$-homogeneous structure.
 %\textcolor{blue} {
 It is important to note that in non-Archimedean settings such as \( \mathbb{Q}_p \), the classical tiling framework based on translating a set along a discrete subgroup (as in the Euclidean lattice case) does not apply. That is, there does not exist a discrete subgroup in \( \mathbb{Q}_p \) such that the translates of a set along this subgroup tile the whole space (up to measure zero).
 %}
% It is important to note that in non-Archimedean settings like \( \mathbb{Q}_p \), 
 %\textcolor{red} { the notion of a lattice,  as a discrete subgroup that tiles the space by translation, does not exist}. 
 Indeed, \( \mathbb{Q}_p \) admits no discrete cocompact subgroup, which stands in sharp contrast to the Euclidean case \( \mathbb{R}^d \). This fundamental structural difference precludes the direct application of lattice tiling techniques that are central to the construction of exponential Riesz bases in the Euclidean setting.  We remark that although Agora, Antezana, and Cabrelli~\cite{AAC2015} extended the tiling-based Riesz basis theory to locally compact abelian (l.c.a.) groups, their approach crucially relies on the existence of a lattice in the group.
To overcome this obstacle, one must resort to alternative frameworks. These often involve compact open subgroups (such as \( \mathbb{Z}_p \subset \mathbb{Q}_p \)), coset decompositions, and tools from harmonic analysis on l.c.a. non-Archimedean groups. 
%\medskip

Recall that a topological group is  \emph{Polish} if it is separable and completely metrizable. A Polish group is said to be \emph{non-Archimedean} if it admits a compatible left-invariant ultrametric. It is known that a Polish group is non-Archimedean if and only if it has a neighborhood basis at the identity consisting of open subgroups (cf.~\cite[Theorem~2.4.1]{Gao2019}). Since 
all groups that we will consider are Polish,  we will drop the adjective  Polish. 
Also recall that a topological Abelian group \( G \) is called a \emph{locally compact abelian group}  (l.c.a. group for short) if its topology is Hausdorff and every point has a compact neighborhood. 

Our first result establishes the existence of exponential Riesz bases for the space of square-integrable functions  over a finite union of balls in any non-Archimedean l.c.a  group.

 \begin{theorem}\label{thm-main1}
 Let $G$ be a non-Archimedean  l.c.a.    %locally compact Abelian 
 group.  For any compact open subset $\Omega \subset G$, there exist  a set  $\Lambda\subset  \widehat{G}$ of  characters which forms a Riesz basis of $L^2(\Omega)$.
 \end{theorem}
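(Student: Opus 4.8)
The plan is to exploit the coset structure that the non-Archimedean topology forces on $\Omega$, which replaces the Euclidean ``finite union of intervals'' by a genuine union of cosets of a compact open subgroup, and then to block-diagonalize the Gram matrix of the candidate exponentials. First I would reduce $\Omega$ to a finite disjoint union of cosets of a single compact open subgroup. Since $G$ is non-Archimedean and locally compact, the open subgroups form a neighborhood basis at the identity, and one of them may be taken compact (an open subgroup is closed, so a closed subset of a compact neighborhood of the identity is compact). For each $x\in\Omega$ openness yields a compact open subgroup $U_x$ with $x+U_x\subseteq\Omega$; covering the compact set $\Omega$ by finitely many such cosets and intersecting the corresponding subgroups produces a compact open subgroup $H$ for which $\Omega$ is saturated, i.e. $\Omega=\bigsqcup_{j=1}^{N}(a_j+H)$ for suitable representatives $a_1,\dots,a_N$. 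This is the structural substitute for decomposing a set into intervals.

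Next I would decompose $L^2(\Omega)$ spectrally. Normalizing Haar measure so that $|H|=1$, the characters of the compact group $H$ form an orthonormal basis of $L^2(H)$, and each extends to an element of $\widehat{G}$ because $H$ is closed. Setting $e_{j,\psi}(a_j+h)=\psi(h)$ on $a_j+H$ for $\psi\in\widehat{H}$, the family $\{e_{j,\psi}\}$ is an orthonormal basis of $L^2(\Omega)=\bigoplus_{j=1}^{N}L^2(a_j+H)$. A direct computation gives, for $\chi,\chi'\in\widehat{G}$, the identity $\langle\chi,\chi'\rangle_{L^2(\Omega)}=\bigl(\sum_{j}\chi(a_j)\overline{\chi'(a_j)}\bigr)\,\langle\chi|_H,\chi'|_H\rangle_{L^2(H)}$, so restricted characters with distinct $H$-restrictions are orthogonal. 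Hence the Gram matrix of any family of restricted characters is block-diagonal, indexed by $\psi\in\widehat{H}$, the $\psi$-block being the $N$-dimensional space spanned by $\{e_{j,\psi}\}_{j=1}^{N}$.

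Then, within the block attached to $\psi$, I would take $\chi_k^{\psi}:=\chi_0^{\psi}\,\eta_k$ $(k=1,\dots,N)$, where $\chi_0^{\psi}\in\widehat{G}$ is a fixed extension of $\psi$ and $\eta_1,\dots,\eta_N$ lie in the annihilator $H^{\perp}\cong\widehat{G/H}$. The coordinate vector of $\chi_k^{\psi}$ in $\{e_{j,\psi}\}_j$ is $\bigl(\chi_0^{\psi}(a_j)\,\eta_k(\bar a_j)\bigr)_j$, so the block analysis matrix factors as $W D^{\psi}$, with $D^{\psi}=\mathrm{diag}(\chi_0^{\psi}(a_j))$ unitary and $W=[\eta_k(\bar a_j)]_{k,j}$ \emph{independent of} $\psi$. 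Since $D^{\psi}$ is unitary, every block shares the singular values of the single matrix $W$, which instantly yields uniform Riesz bounds $\sigma_{\min}(W)^2$ and $\sigma_{\max}(W)^2$ as soon as $W$ is invertible, completeness following because each block is then spanned. Thus the whole problem collapses to choosing $\eta_1,\dots,\eta_N\in\widehat{G/H}$ so that the evaluation matrix $W=[\eta_k(\bar a_j)]$ is invertible, and the desired basis is $\Lambda=\{\chi_0^{\psi}\eta_k:\psi\in\widehat{H},\,1\le k\le N\}$.

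This last point is the crux, and I expect it to be the main obstacle. The group $G/H$ is a discrete countable abelian group and $\bar a_1,\dots,\bar a_N$ are distinct points of it. If no invertible $W$ existed, there would be a nonzero vector $(c_1,\dots,c_N)$ with $\sum_j c_j\,\eta(\bar a_j)=0$ for every $\eta\in\widehat{G/H}$; but this says that the Fourier transform of the finitely supported measure $\sum_j c_j\,\delta_{\bar a_j}$ vanishes identically, forcing the measure to be zero by injectivity of the Fourier transform on $G/H$ (equivalently, by linear independence of distinct characters), and hence $c_j=0$ for all $j$. Therefore the evaluation vectors $(\eta(\bar a_j))_j$ span $\mathbb{C}^{N}$ as $\eta$ ranges over $\widehat{G/H}$, and $N$ of them can be selected to form a basis, giving an invertible $W$. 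The genuinely new feature here is that a \emph{single} choice of $\eta_1,\dots,\eta_N$ serves all the infinitely many blocks at once; granting this separation-of-points step, the remaining verifications are routine linear algebra together with Plancherel on the compact subgroup $H$.
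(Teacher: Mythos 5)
Your proposal is correct and follows essentially the same route as the paper: decompose $\Omega$ into finitely many cosets of a single compact open subgroup $H$, pick finitely many characters in $H^\perp \cong \widehat{G/H}$ whose evaluation matrix at the coset representatives is invertible (this is exactly the paper's Lemma~3.1, proved by the same separation-of-points argument), and combine them with one extension of each character of $H$ (the paper's coset representatives $L$ of $\widehat{G}/H^\perp$, resp.\ $\mathbb{L}_\gamma$ in the $p$-adic case), giving the same frequency set $\Lambda = D + L$. The only difference is bookkeeping: the paper verifies the frame and Riesz-sequence inequalities separately and invokes its Lemma~2.1, whereas you get both bounds simultaneously from the block-diagonal Gram matrix whose $\psi$-blocks $WD^{\psi}$ all share the singular values of the single matrix $W$ --- a slightly cleaner but mathematically equivalent packaging of the same computation.
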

 
Applied to the field $\Q_p$ of \( p \)-adic numbers, Theorem \ref{thm-main1} gives the following corollary.

\begin{corollary}
Let \( \Omega \subseteq \mathbb{Q}_p^d \) be a compact open subset ($d\ge 1$). Then $L^2(\Omega)$ admits  a Riesz basis consisting of characters in  $\widehat{\Q_p^d}$.
\end{corollary}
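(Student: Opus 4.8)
The corollary is the special case $G=\Qp^d$ of Theorem~\ref{thm-main1}, since $\Qp^d$ is a non-Archimedean l.c.a.\ group (it is totally disconnected, metrizable by the $p$-adic absolute value, and locally compact), so my plan is to establish the theorem and then specialize. The first step is to replace the abstract compact open set by a concrete one: using that $G$ has a neighborhood basis at the identity of open subgroups and is locally compact, $G$ possesses a compact open subgroup; covering the compact open set $\Omega$ by cosets $x+U_x\subseteq\Omega$ and intersecting the finitely many subgroups from a finite subcover, I obtain a single compact open subgroup $K$ with $\Omega=\bigsqcup_{j=1}^N (c_j+K)$. In $\Qp^d$ this is the elementary fact that a compact open set is a finite disjoint union of balls, all realizable as cosets of one $p^{k}\Zp^d$.

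Next I would build $\Lambda$ fiber by fiber over $\widehat K$. The annihilator $K^\perp=\{\chi\in\widehat G:\chi|_K\equiv1\}\cong\widehat{G/K}$ is compact (as $K$ is open, $G/K$ is discrete), and the restriction map $\widehat G\to\widehat K$ is surjective, since a continuous character of the \emph{open} subgroup $K$ extends to $G$ by divisibility of $\mathbb{T}$. Fix a lift $\chi_\gamma\in\widehat G$ with $\chi_\gamma|_K=\gamma$ for each $\gamma\in\widehat K$, fix $\psi_1,\dots,\psi_N\in K^\perp$ to be chosen below, and set $\Lambda=\{\chi_\gamma\psi_i:\gamma\in\widehat K,\ 1\le i\le N\}$. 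The role of $K^\perp$ is that each $\psi_i$ is constant, equal to $\psi_i(c_j)$, on the coset $c_j+K$, so there $\chi_\gamma\psi_i$ equals $\chi_\gamma(c_j)\psi_i(c_j)\,\gamma$. Through the measure-preserving identification $L^2(\Omega)\cong\bigoplus_{j=1}^N L^2(K)$ and the orthogonality in $L^2(K)$ of distinct $\gamma$, the whole problem then decouples over $\gamma\in\widehat K$ into identical finite-dimensional problems, each governed by the single $N\times N$ matrix $W=(\psi_i(c_j))_{i,j}$.

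The crux is to choose $\psi_1,\dots,\psi_N\in K^\perp$ making $W$ invertible. The cosets furnish $N$ distinct elements $\bar c_1,\dots,\bar c_N$ of the discrete group $G/K$; by Pontryagin reflexivity these define $N$ distinct characters $\psi\mapsto\psi(c_j)$ of the compact group $K^\perp$, and distinct characters of a compact abelian group are orthonormal, hence linearly independent, in $L^2(K^\perp)$. Consequently there exist points $\psi_1,\dots,\psi_N\in K^\perp$ at which the evaluation matrix $W=(\psi_i(c_j))$ is nonsingular. This is the step I expect to require the most care, and it is where the non-Archimedean structure pays off: the compact open subgroup forces \emph{exact} orthogonality inside each ball, so there is no Euclidean-style Beurling density obstruction, and one fixed well-conditioned $W$ serves all fibers at once.

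Finally I would verify \eqref{eq:Riesz} with uniform constants. For a finite combination $g=\sum_{\gamma,i}a_{\gamma,i}\chi_\gamma\psi_i$, its restriction to $c_j+K$ is the sum over $\gamma$ of $\big(\sum_i a_{\gamma,i}\chi_\gamma(c_j)\psi_i(c_j)\big)\gamma$; writing $b_\gamma=W^\top a_\gamma$ and $D_\gamma=\mathrm{diag}(\chi_\gamma(c_j))$, and using orthogonality of the $\gamma$, one gets $\|g\|^2=m_K\sum_\gamma\|D_\gamma b_\gamma\|^2=m_K\sum_\gamma\|W^\top a_\gamma\|^2$, where $m_K$ is the Haar measure of $K$. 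Since each $|\chi_\gamma(c_j)|=1$ the factor $D_\gamma$ is unitary and drops out, so the singular values of $W$ give constants $0<c\le C$ with $m_K c\sum_{\gamma,i}|a_{\gamma,i}|^2\le\|g\|^2\le m_K C\sum_{\gamma,i}|a_{\gamma,i}|^2$, which is \eqref{eq:Riesz} \emph{uniformly in} $\gamma$; this uniformity, coming precisely from the unitarity of $D_\gamma$, is what upgrades the system from a mere basis to a Riesz basis. Completeness follows because $W^\top$ is onto and $\widehat K$ is complete in each $L^2(K)$, so the system spans $\bigoplus_j L^2(K)=L^2(\Omega)$. Specializing this construction to $G=\Qp^d$ yields the corollary.
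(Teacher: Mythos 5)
Your proposal is correct and takes essentially the same route as the paper: decompose $\Omega$ into cosets $c_j+K$ of a single compact open subgroup, pick $D=\{\psi_i\}\subset K^\perp$ so that the evaluation matrix $(\psi_i(c_j))$ is invertible (this is the paper's Lemma~\ref{lem-discrete}, proved there via point separation rather than your orthogonality-of-characters argument), set $\Lambda=D+L$ with $L$ a transversal of $\widehat{G}/K^\perp$ (equivalently, your lifts of $\widehat{K}$), and decouple over fibers using exact orthogonality and the unimodularity of character values. The only organizational difference is that you verify the inequality \eqref{eq:Riesz} plus completeness directly through the singular values of $W$, whereas the paper checks the frame and Riesz-sequence properties separately and invokes Lemma~\ref{lem-RSF}.
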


Recall that a Borel set \( \Omega \subset G \) is said to \( k \)-tile the group \( G \) with a translation set \( T \subset G \) if almost every point in \( G \) is covered exactly \( k \) times by the translated sets \( \Omega + t \) for \( t \in T \), that is,
\[
\sum_{t \in T} 1_{\Omega}(x - t) = k, \quad \text{for almost every } x \in G.
\]
As a direct consequence of \cite[Theorem~1.1]{F2024}, we obtain the following corollary.

\begin{corollary}
Let \( \Omega \subseteq \mathbb{Q}_p \) be a Borel set of positive and finite Haar measure. If \( \Omega \) \( k \)-tiles \( \mathbb{Q}_p \) by translations, then \( L^2(\Omega) \) admits a Riesz basis consisting of characters in \( \widehat{\mathbb{Q}_p} \).
\end{corollary}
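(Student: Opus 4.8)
The plan is to derive the corollary from Theorem~\ref{thm-main1} by feeding it the structural classification of $k$-tiles in $\mathbb{Q}_p$ supplied by \cite[Theorem~1.1]{F2024}; the argument is a short two-step reduction, with all the analytic work already carried by those two results. The first and decisive step is to invoke \cite[Theorem~1.1]{F2024}, which (in the form we need) guarantees that a Borel set $\Omega \subseteq \mathbb{Q}_p$ of positive and finite Haar measure that $k$-tiles $\mathbb{Q}_p$ by translations must coincide, up to a set of Haar measure zero, with a compact open subset $K \subseteq \mathbb{Q}_p$, i.e.\ a finite union of balls. This is the non-Archimedean rigidity analogue of the phenomenon recorded in \cite{FFS2016,FFLS2019} for ordinary tiles, and it is exactly what excludes the pathological, non-compact-open sets that would otherwise obstruct an exponential-basis construction.

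Next I would transfer bases across the null-set identification. Since $\Omega$ and $K$ differ only by a Haar-null set, restriction induces a canonical isometric isomorphism $L^2(\Omega) \cong L^2(K)$ under which each character $\chi \in \widehat{\mathbb{Q}_p}$, viewed on $\Omega$, is identified with its restriction to $K$. Because norms and inner products are thereby preserved, a family $\Lambda \subset \widehat{\mathbb{Q}_p}$ is complete and satisfies the frame inequalities \eqref{eq:Riesz} in $L^2(K)$ if and only if it does so in $L^2(\Omega)$; hence $\Lambda$ is a Riesz basis of one space precisely when it is a Riesz basis of the other.

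Finally I would apply Theorem~\ref{thm-main1} with $G = \mathbb{Q}_p$ to the compact open set $K$, obtaining a set $\Lambda \subset \widehat{\mathbb{Q}_p}$ of characters that forms a Riesz basis of $L^2(K)$; by the identification above this same $\Lambda$ is a Riesz basis of $L^2(\Omega)$, which completes the proof. The only genuine obstacle lies in the cited structural theorem: once a $k$-tile is known to be compact open modulo a null set, the statement is indeed a direct consequence of Theorem~\ref{thm-main1}, the transfer being entirely formal.
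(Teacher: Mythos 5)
Your proof is correct and takes exactly the route the paper intends: the paper derives this corollary by combining the structural rigidity result of \cite[Theorem~1.1]{F2024} (a $k$-tile of $\mathbb{Q}_p$ is, up to a Haar-null set, a compact open set) with Theorem~\ref{thm-main1}, which is precisely your two-step reduction. The null-set transfer of the Riesz basis between $L^2(\Omega)$ and $L^2(K)$ is the only detail the paper leaves implicit, and you handle it correctly.
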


We note that while the above result provides a positive answer in the one-dimensional space $\mathbb{Q}_p$, the situation in higher-dimensional spaces  \( \mathbb{Q}_p^d \) remains unclear. In particular, it is not known whether multi-tiling sets in \( \mathbb{Q}_p^d \) necessarily admit exponential Riesz bases. This presents an interesting direction for further research, especially in light of the deep structural differences between \( \mathbb{R}^d \) and \( \mathbb{Q}_p^d \).

%
%Recall   that a Borel set $\Omega$  is said to  $k$-tile  $G$ with  translation set $T$  if almost every point gets covered exactly $k$ times by the sets $\Omega+t(t\in T)$, that is, 
%\[\sum_{t\in T} 1_{\Omega}(x-t)=k,\]
%for almost every $x\in G$. As a consequence of  \cite[Theorem 1.1.]{F2024}, we obtain the following corollary.
%
%\begin{corollary}
%Let \( \Omega \subseteq \mathbb{Q}_p\) be a  Borel set of positive and finite Haar measure.
%If $\Omega$  $k$-tiles $\Q_p$ by translation, then  $L^2(\Omega)$ admits  a Riesz basis consisting of characters in  $\widehat{\Q_p^d}$.
%\end{corollary}
%To describe sets that exclude exponential Riesz bases, we introduce the following notion.

In order to state our non-existence result, we need the following notion. Let \( S \) be a Borel subset  of a locally compact Abelian group $G$.
 A \emph{translation set} \( T \) (\(\subset G \)) of \( S \) is a set such that  translates \( \{S + t\}_{t \in T}\) are pairwise disjoint (i.e. the symmetric difference of two translates has zero Haar measure).  We always assume \( 0 \in T \). 
 % {\color{blue}Assume that $G$ is not discrete and \( \{B_m\}_{m\geq 0} \) 
%is a neighborhood base of the identity consisting of compact open subgroups satisfying \(  B_m  \supsetneq  B_{m+1} \).
We are interested in translations of $S$ controlled within compact open subgroups. 
Thus, for a compact open subgroup $B$,  we
define the \emph{\( B \)-translation number} of \( S \) by
\[
\mathcal{N}_B(S) := \max\left\{ \# T : 0 \in T \subset B,\ T \text{ is a translation set of } S \right\}.
\]

\begin{proposition}\label{thm:criterion}
Let \( G \) be a non-Archimedean, non-discrete  l.c.a. group with Haar measure \( \mu \).
Suppose that \( \Omega \subset G \) is a bounded open set of the form
\[
\Omega = \bigsqcup_{n=1}^\infty (x_n + B_{n}),
\]
where \( x_n \in G \) and \( B_{n} \) are compact open subgroups of  \( G \), satisfying
\begin{itemize}
    \item[(1)] \(B_{ n}  \supset  B_{n+1} \) for all \( n \geq 1 \);
    \item[(2)] \( \displaystyle\limsup_{n \to \infty} \mathcal{N}_{B_{n}}(\Omega_n) = +\infty \), where \( \Omega_n := \bigsqcup_{k=n+1}^\infty (x_k + B_{k}) \).
\end{itemize}
Then there does not exist any subset \( \Lambda \subset \widehat{G} \) which forms a Riesz basis for \( L^2(\Omega) \).
\end{proposition}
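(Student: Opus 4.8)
The plan is to argue by contradiction: assume $\Lambda\subset\widehat G$ gives a Riesz basis $\{e_\lambda\}_{\lambda\in\Lambda}$ of $L^2(\Omega)$ with constant $K$, and derive a bound on $\mathcal N_{B_n}(\Omega_n)$ that stays finite, contradicting (2). First I would record the two working forms of the hypothesis. By the frame characterization \cite{Chr2016}, a Riesz basis is simultaneously a frame, so $\frac1K\|f\|^2\le\sum_{\lambda}|\langle f,e_\lambda\rangle|^2\le K\|f\|^2$ for all $f\in L^2(\Omega)$, and a Riesz sequence, so $\frac1K\sum|a_\lambda|^2\le\|\sum a_\lambda e_\lambda\|^2\le K\sum|a_\lambda|^2$. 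The frame form localizes: for Borel $S\subseteq\Omega$ and $f\in L^2(S)$ one has $\langle f,e_\lambda\rangle_{L^2(\Omega)}=\langle f,e_\lambda\rangle_{L^2(S)}$, so $\{e_\lambda|_S\}$ is again a frame for $L^2(S)$ with the same bounds $[1/K,K]$; I would apply this with $S=\Omega_n$.

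The main quantitative tool I would establish is a local counting estimate for $\Lambda$. Each $B_k$ being compact open, $B_k^\perp\subset\widehat G$ is compact open with $\mu(B_k)\,\mu_{\widehat G}(B_k^\perp)=1$. Testing the frame inequality against the normalized modulated indicator $u=\overline{\xi}\,\mathbf 1_{x+B}/\sqrt{\mu(B)}$ of a single ball $x+B\subseteq\Omega$ gives $|\langle e_\lambda,u\rangle|^2=\mu(B)\,\mathbf 1_{\{\lambda-\xi\in B^\perp\}}$, independent of $x$, whence
\[
\frac{1}{K\mu(B)}\ \le\ \#\bigl(\Lambda\cap(\xi+B^\perp)\bigr)\ \le\ \frac{K}{\mu(B)}\qquad(\forall\,\xi)
\]
for every shape $B$ occurring as a ball inside $\Omega$ (and, by summing over subcosets, for every coarser $B$). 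This pins the density of $\Lambda$ at $\asymp\mu(\Omega)$ and shows $\Lambda$ is locally finite at every scale with a bound governed only by $K$. In the same vein, combining the Riesz-sequence bound (linear independence, via the Gram matrix $(\widehat{\mathbf 1_\Omega}(\lambda-\lambda'))$) with completeness and the biorthogonal dual basis yields the sandwich $F(B)\le\#(\Lambda\cap B^\perp)\le G(B)$, where $F(B)$, $G(B)$ count the $B$-cosets contained in, respectively meeting, $\Omega$.

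The heart of the proof is to feed the packing hypothesis into these estimates. Fix $n$ with $N:=\mathcal N_{B_n}(\Omega_n)$ large and a translation set $T\subset B_n$ of size $N$. Since each $t\in T$ lies in $B_n$ it preserves every $B_n$-coset, so disjointness of $\{\Omega_n+t\}_{t\in T}$ forces $N\,\mu(\Omega_n\cap c)\le\mu(B_n)$, i.e. $\mu(\Omega_n\cap c)\le\mu(B_n)/N$ for every $B_n$-coset $c$: the tail carries only a $1/N$-fraction of the mass of each coset it meets. I would then exploit that, on each such coset, two characters lying in a common $B_n^\perp$-coset are \emph{parallel} (their ratio $e_{\lambda-\lambda'}$ is constant on $B_n$-cosets): this collapses the rank of the relevant block of exponentials on $\Omega_n$, and weighed against the frame lower bound $1/K$ for $L^2(\Omega_n)$ it quantifies a near-degeneracy whose severity grows with $N$. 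The delicate point is to make this loss genuinely accumulate to $K\gtrsim N$, after which $\limsup_n N=+\infty$ contradicts the finiteness of $K$.

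The step I expect to be the main obstacle is precisely this accumulation. At any \emph{single} scale $B_n$ the estimates above are mutually consistent: the constant $K$ comfortably absorbs the density bounds, and the smallness $\mu(\Omega_n\cap c)\le\mu(B_n)/N$ only tightens $\#(\Lambda\cap B_n^\perp)$ toward its lower end $F(B_n)$ without producing an outright contradiction. Hence the contradiction must be extracted from the \emph{sequence} of scales supplied by (2), in the spirit of the multiscale construction of Kozma, Nitzan and Olevskii \cite{KNO2023}. Concretely I would use the local finiteness of $\Lambda$ to pass, along a subsequence with $N\to\infty$, to a limit of the normalized configurations $(\Omega_n,\Lambda)$, obtaining a limiting frame with the same constant $K$ for the $L^2$-space of a limit set of vanishing relative measure and infinite packing number, which is impossible. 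The technical difficulty is that in a general non-Archimedean l.c.a. group the subgroups $B_n$ need not be mutually isomorphic, so there is no canonical rescaling; making the limit intrinsic and verifying that the Riesz bounds survive it is the crux of the argument.
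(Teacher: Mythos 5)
Your proposal has a genuine gap: it never actually derives a contradiction. The steps you carry out rigorously --- localizing the frame inequality to $L^2(\Omega_n)$, the density bound $\frac{1}{K\mu(B)}\le\#\bigl(\Lambda\cap(\xi+B^\perp)\bigr)\le\frac{K}{\mu(B)}$, and the packing estimate $\mu(\Omega_n\cap c)\le\mu(B_n)/N$ for every $B_n$-coset $c$ --- are correct but, as you yourself observe, mutually consistent for any fixed $K$. The ``rank collapse / near-degeneracy'' step that is supposed to convert them into $K\gtrsim N$ is only sketched, and your fallback --- passing to a limit of the configurations $(\Omega_n,\Lambda)$ along a subsequence --- is precisely the part you concede you cannot make precise (no canonical rescaling between the $B_n$). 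So the crux of the proposition is left unproved. Moreover, your diagnosis that ``the contradiction must be extracted from the sequence of scales'' is mistaken: it can be extracted at a \emph{single} scale, namely at any $n$ with $\mathcal{N}_{B_n}(\Omega_n)>K^3$, and no counting of $\Lambda$ is needed at all.

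The idea you are missing is the translation argument of Kozma--Nitzan--Olevskii, which the non-Archimedean structure makes very simple because a translation by $t\in B_n$ fixes each set $x_k+B_k$, $k\le n$, setwise. Fix a compact open subgroup $U\supset\Omega$ with $\mu(U)=1$; distinct $\lambda\in\Lambda$ may be assumed to restrict to distinct, hence orthonormal, characters of $U$. Expand the normalized tail indicator $f_n=\mu(\Omega_n)^{-1/2}\mathbf{1}_{\Omega_n}=\sum_\lambda c_\lambda\chi_\lambda$ in the Riesz basis, so that $1/K\le\sum_\lambda|c_\lambda|^2\le K$, and use the \emph{same coefficients} to define the extension $\widetilde{f_n}:=\sum_\lambda c_\lambda\chi_\lambda\in L^2(U)$, which satisfies $\|\widetilde{f_n}\|_{L^2(U)}^2=\sum_\lambda|c_\lambda|^2\le K$. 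For $t\in B_n$, the function $\widetilde{f_n}(\cdot+t)$ has coefficients $c_\lambda\chi_\lambda(t)$ of the same moduli, so the lower Riesz bound on $\Omega$ gives $\|\widetilde{f_n}\|^2_{L^2(\Omega+t)}=\|\widetilde{f_n}(\cdot+t)\|^2_{L^2(\Omega)}\ge K^{-1}\sum_\lambda|c_\lambda|^2\ge K^{-2}$. Since $\widetilde{f_n}$ agrees with $f_n$ on $\Omega$, it vanishes on $\bigsqcup_{k\le n}(x_k+B_k)$, and since $t\in B_n\subset B_k$ this head of $\Omega+t$ coincides with the head of $\Omega$; hence all of this mass sits on the tail, i.e. $\int_{\Omega_n+t}|\widetilde{f_n}|^2\,d\mu\ge K^{-2}$. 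Summing over a maximal translation set $T\subset B_n$ (the sets $\Omega_n+t$, $t\in T$, being pairwise disjoint and contained in $U$) yields
\begin{equation*}
K\ \ge\ \|\widetilde{f_n}\|_{L^2(U)}^2\ \ge\ \sum_{t\in T}\int_{\Omega_n+t}|\widetilde{f_n}|^2\,d\mu\ \ge\ \frac{\mathcal{N}_{B_n}(\Omega_n)}{K^2},
\end{equation*}
so $\mathcal{N}_{B_n}(\Omega_n)\le K^3$ for every $n$, contradicting hypothesis (2). Your measure and density estimates, while true, play no role in this argument; the essential device is the extension $\widetilde{f_n}$ of a single test function per scale, whose coefficient $\ell^2$-norm is invariant under translation.
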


To prove Proposition~\ref{thm:criterion}, we adopt the 
`translation' technique from ~\cite{KNO2023}. Thanks to the non-Archimedean structure of the group, the argument becomes significantly simpler than in the Euclidean case $\mathbb{R}$.
   In fact, such sets $\Omega$ in Proposition \ref{thm:criterion} can be constructed in any non-Archimedean, non-discrete l.c.a.\ group.
   So, we can confirm the following result.

\begin{theorem}%[Pathological Nonexistence]
\label{thm-main2-general} 
Let $G$ be a non-Archimedean, non-discrete l.c.a. group with Haar measure $\mu$. 
There exists a bounded open set \( \Omega \subset G \) with \(0< \mu(\Omega) <+\infty \) such that no subset \( \Lambda \subset \widehat{G} \) forms a Riesz basis for \( L^2(\Omega) \). 
\end{theorem}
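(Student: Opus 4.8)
The plan is to deduce Theorem~\ref{thm-main2-general} entirely from Proposition~\ref{thm:criterion}: I will construct, in an arbitrary non-Archimedean non-discrete l.c.a.\ group $G$, a bounded open set $\Omega=\bigsqcup_{n\ge 1}(x_n+B_n)$ of finite positive Haar measure whose subgroups $B_n$ form a decreasing chain and whose tails $\Omega_n=\bigsqcup_{k>n}(x_k+B_k)$ satisfy $\mathcal{N}_{B_n}(\Omega_n)\to\infty$. First I would fix the scales. Since $G$ is non-Archimedean it has a neighborhood basis at $0$ of open subgroups, and since $G$ is locally compact one of these is contained in a compact set; being open it is also closed, hence compact, giving a compact open subgroup $B_0$. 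Non-discreteness forces arbitrarily small open subgroups, so inside $B_0$ one extracts a strictly decreasing chain $B_0\supsetneq B_1\supsetneq B_2\supsetneq\cdots$ of compact open subgroups, each index $d_n:=[B_n:B_{n+1}]$ being finite because $B_n$ is compact and $B_{n+1}$ open. Passing to a subsequence I may assume $d_n\to\infty$.

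Next I would choose the translates by the single rule $x_1\in B_0\setminus B_1$ and $x_n\in B_{n-1}\setminus B_n$ for $n\ge 2$, and set $\Omega=\bigsqcup_{n\ge 1}(x_n+B_n)$. Two checks validate this. For disjointness, recall the ultrametric fact that two cosets of nested subgroups are either disjoint or nested; for $n<m$ one has $x_m\in B_{m-1}\subseteq B_n$ while $x_n\notin B_n$, so $x_m-x_n\notin B_n$ and thus $x_n+B_n$, $x_m+B_m$ are disjoint. For the size, $\Omega\subseteq B_0$ is bounded and open, while $\mu(\Omega)=\sum_n\mu(B_n)$ is a convergent series (successive ratios $1/d_n\le 1/2$) bounded below by $\mu(B_1)>0$, whence $0<\mu(\Omega)<\infty$.

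The heart of the argument is the lower bound for $\mathcal{N}_{B_n}(\Omega_n)$. The choice of centers gives $\Omega_m\subseteq B_m$ for every $m$ (each $x_k+B_k$ with $k>m$ lies in $B_m$), so the tail $\Omega_n$ meets exactly two cosets of $B_{n+1}$: the subgroup $B_{n+1}$ itself, which contains $\Omega_{n+1}$, and $x_{n+1}+B_{n+1}$. Hence $\Omega_n-\Omega_n$ touches only the three cosets of $0$ and $\pm x_{n+1}$. I would then pick a set $T$ of coset representatives of $B_{n+1}$ in $B_n$ whose reductions in the finite group $B_n/B_{n+1}$ form an independent set of the Cayley graph with connection set $\{\pm\overline{x_{n+1}}\}$ (and contain $\overline 0$). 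For such $T$ the occupied cosets of the translates $\{\Omega_n+t\}_{t\in T}$ are pairwise disjoint, so the translates themselves are disjoint; and since this Cayley graph is a disjoint union of cycles, $T$ may be chosen with $\#T\ge d_n/3$. Therefore $\mathcal{N}_{B_n}(\Omega_n)\ge d_n/3\to\infty$, condition (2) of Proposition~\ref{thm:criterion} holds (condition (1) being immediate), and the theorem follows.

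The main obstacle, and the one point requiring care, is reconciling two opposing demands: disjointness of the balls forces the centers to spread apart, whereas a large translation number requires each tail to remain concentrated in very few cosets of the next subgroup. I expect the crux to be locating centers achieving both at once. The resolution above is the observation that placing each new center in $B_{n-1}\setminus B_n$ keeps every tail inside $B_n$ while splitting it into precisely two cosets of $B_{n+1}$ — the minimal concentration compatible with disjointness, and already enough, once $d_n\to\infty$, to drive $\mathcal{N}_{B_n}(\Omega_n)$ to infinity.
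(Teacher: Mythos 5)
Your proof is correct and follows essentially the same route as the paper: the same nested compact open subgroups $B_n$ with $\#(B_n/B_{n+1})\to\infty$, the same choice of centers $x_n\in B_{n-1}\setminus B_n$, the same observation that each tail $\Omega_n$ lies in the two cosets $B_{n+1}\cup(x_{n+1}+B_{n+1})$, and then an appeal to Proposition~\ref{thm:criterion}. The only difference is that you spell out, via the Cayley-graph independent-set bound $\#T\ge \#(B_n/B_{n+1})/3$, the step the paper leaves implicit, namely why containment in two cosets together with $\#(B_n/B_{n+1})\to\infty$ forces $\mathcal{N}_{B_n}(\Omega_n)\to\infty$.
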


We now illustrate Theorem~\ref{thm-main2-general} with an explicit construction in the field $\Q_p$ of \( p \)-adic numbers. 
Denote by $\Z_p$ the ring of $p$-adic integers.

\begin{corollary}%[Explicit Construction]
Let \( \{m_n\} \) be a strictly increasing sequence of integers with \[ \limsup (m_{n+1} - m_{n}) = +\infty. \] Then the open set
\[
\bigcup_{n=1}^\infty   p^{m_n-1}+ p^{m_n}\Z_p
\]
excludes Riesz bases of continuous additive characters. \end{corollary}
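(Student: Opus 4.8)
The plan is to realize the given set as an instance of the configuration in Proposition~\ref{thm:criterion} for $G=\Qp$ and then invoke that proposition directly. Since $\Qp$ is a non-Archimedean, non-discrete l.c.a.\ group and $\widehat{\Qp}\cong\Qp$ with characters given by the continuous additive characters, the conclusion of the proposition is precisely the non-existence of a Riesz basis of such characters. Recall that the compact open subgroups of $\Qp$ are exactly the balls $p^{k}\Zp$, $k\in\Z$, with $\mu(p^{k}\Zp)=p^{-k}$ under the normalization $\mu(\Zp)=1$. Accordingly I would set
\[
x_n:=p^{m_n-1},\qquad B_n:=p^{m_n}\Zp,
\]
so that each summand $p^{m_n-1}+p^{m_n}\Zp$ is exactly the coset $x_n+B_n$.

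First I would verify that the union is a genuine disjoint union of the required type. Every point of $x_n+B_n=p^{m_n-1}(1+p\Zp)$ has $p$-adic valuation exactly $m_n-1$; since $\{m_n\}$ is strictly increasing the valuations $m_n-1$ are pairwise distinct, so the balls are pairwise disjoint, the set $\Omega=\bigsqcup_n(x_n+B_n)$ is open and bounded (it is contained in $p^{m_1-1}\Zp$), and $\mu(\Omega)=\sum_n p^{-m_n}\in(0,\infty)$. Condition~(1) of Proposition~\ref{thm:criterion} is immediate: $m_{n+1}>m_n$ forces $B_n=p^{m_n}\Zp\supset p^{m_{n+1}}\Zp=B_{n+1}$.

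The heart of the argument is the translation-number blow-up in condition~(2). Here $\Omega_n=\bigsqcup_{k>n}(p^{m_k-1}+p^{m_k}\Zp)$, and every point of $\Omega_n$ has valuation $\ge m_{n+1}-1\ge m_n$, so
\[
\Omega_n\subset p^{m_{n+1}-1}\Zp\subseteq p^{m_n}\Zp=B_n .
\]
The subgroup $p^{m_{n+1}-1}\Zp$ has index $p^{(m_{n+1}-m_n)-1}$ in $B_n$. Choosing $T$ to be a complete set of coset representatives of $p^{m_{n+1}-1}\Zp$ in $B_n$, with $0$ taken as the representative of the subgroup itself (so $0\in T\subset B_n$), each translate $\Omega_n+t$ lands in the distinct coset $t+p^{m_{n+1}-1}\Zp$; these cosets are pairwise disjoint, hence the translates are pairwise disjoint and $T$ is a translation set of $\Omega_n$. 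Therefore
\[
\mathcal{N}_{B_n}(\Omega_n)\ \ge\ \#T\ =\ p^{(m_{n+1}-m_n)-1}.
\]
Since $\limsup_n(m_{n+1}-m_n)=+\infty$, we obtain $\limsup_n \mathcal{N}_{B_n}(\Omega_n)=+\infty$, which is condition~(2).

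With (1) and (2) verified, Proposition~\ref{thm:criterion} applies and yields that no $\Lambda\subset\widehat{\Qp}$ forms a Riesz basis of $L^2(\Omega)$, that is, $\Omega$ excludes Riesz bases of continuous additive characters. The only genuinely substantive step is the lower bound on $\mathcal{N}_{B_n}(\Omega_n)$, and I expect this to be the main point to get right; the non-Archimedean nested-subgroup structure, however, makes it clean, since disjointness of translates reduces to disjointness of cosets with no geometric overlap estimates of the kind required over $\R$. The one containment to state carefully is $\Omega_n\subset p^{m_{n+1}-1}\Zp$, which enables the coset construction and is exactly where the strict monotonicity of $\{m_n\}$ (ensuring $m_{n+1}-1\ge m_n$) is used.
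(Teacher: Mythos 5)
Your proof is correct and follows the paper's intended route: the corollary is an instance of Proposition~\ref{thm:criterion} with $x_n=p^{m_n-1}$ and $B_n=p^{m_n}\Z_p$, exactly parallel to how the paper's proof of Theorem~\ref{thm-main2-general} instantiates that proposition. Your verification of condition~(2) --- containing $\Omega_n$ in the single subgroup $p^{m_{n+1}-1}\Z_p$ and taking its $p^{\,m_{n+1}-m_n-1}$ coset representatives in $B_n$ as a translation set --- is precisely the computation the paper leaves implicit, and is in fact slightly cleaner than the two-coset bound $\Omega_n\subset B_{n+1}\cup(x_{n+1}+B_{n+1})$ used in the paper's general construction.
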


For instance, the set
\(
\bigcup_{n=1}^\infty p^{n^2}+p^{n^2+1}\Z_p
\)
provides an explicit example lacking such basis.
\medskip

The paper is organized as follows. Section \ref{sect:2} is devoted to recall some basic notions about Riesz basis and $p$-adic numbers.
The $p$-adic setting provides a concrete and instructive framework in which key ideas will be clearly illustrated. 
Our main theorems will be first proved in the p-adic setting and subsequently we will explain how the arguments  extend to general setting of  non-Archimedean l.c.a  groups.
Theorem \ref{thm-main1} will be prove in Section  \ref{sect:3}, Proposition  \ref{thm:criterion}  and  Theorem \ref{thm-main2-general} will be proved  in Section \ref{sect:4}.

\section{preliminary}\label{sect:2}

In this section, we present an equivalent definition of Riesz basis which is more practical to use,  and some basic notions about non-Archimedean l.c.a. groups.

%{\color{red} "Non-Archimedean locally compact Abelian Polish group" is too long.  l.c.a. for locally compact Abelian is standard. I suggest to use l.c.a.,  so that we will have
%``Non-Archimedean l.c.a. Polish group."  It is still long. I suggest that at the beginning we convention that all groups that we consider are Polish, then we can simply talk about non-Archimedean l.c.a. groups.}

\subsection{Riesz sequences and frames in Hilbert spaces}
We first present an equivalent definition of Riesz basis. 
Let \( \mathcal{H} \) be a separable Hilbert space. A system \(\{ f_n \} \subset \mathcal{H}\) is called a \emph{Riesz sequence} if 
\begin{equation}\label{eq:Rieszb}
    \frac{1}{K} \sum_{n=1}^\infty |a_n|^2 \leq \left\| \sum_{n=1}^\infty a_n f_n \right\|^2 \leq K \sum_{n=1}^\infty |a_n|^2,  \quad \forall \{a_n\}\in \ell^2.
\end{equation}
These inequalities  are nothing but \eqref{eq:Riesz}. Here the completeness is not required (but is required by a Riesz basis).  
A system \(\{ f_n \} \subset \mathcal{H}\) is called a \emph{frame} if 
\begin{equation}\label{eq:Riesz2}
    \frac{1}{K} \|f \|^2 \leq \sum_{n=1}^\infty \left| \langle f,f_n \rangle \right|^2 \leq K \|f \|^2,  \quad  \forall  f \in \mathcal{H}.
\end{equation}

If \(\{f_n\}\) is a Riesz basis, there exists a \emph{dual system} \(\{g_n\}\) with \(\langle f_n, g_m \rangle = \delta_{n, m}\) and \(\|g_n\| \leq C\) for some constant \( C > 0 \). We can construct \(\{g_n\}\) as follows: let \(\{\phi_n\}\) be an orthonormal basis in \( \mathcal{H} \) and let \( T: \mathcal{H} \to \mathcal{H} \) be a bounded invertible operator such that \( T\phi_n = f_n \). Then the system defined by \( g_n := (T^*)^{-1}\phi_n \) satisfies the required properties.

As we will see below,
a duality argument establishes that \(\{ f_n \}\) is a Riesz basis if and only if it is minimal (i.e., no vector in the system lies within the closed span of the others) and satisfies the inequality \eqref{eq:Riesz2}, where \( K \) matches the constant in \eqref{eq:Rieszb}. This leads to the following characterization of Riesz bases, which  is well known in the literature (cf. \cite{KN2015}), for which we provide a proof  for the reader's convenience.
% and to keep the presentation self-contained, we include a proof below.

\begin{lemma} \label{lem-RSF}
  A system of vectors in a Hilbert space forms a Riesz basis if and only if it is both a Riesz sequence and a frame.
\end{lemma}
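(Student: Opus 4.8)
The plan is to prove the two implications separately, using the operator characterization of a Riesz basis recalled in the introduction together with the elementary observation that the lower frame bound already forces completeness.

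For the forward direction, suppose \(\{f_n\}\) is a Riesz basis, so that \(f_n = Te_n\) for some bounded invertible operator \(T\) and some orthonormal basis \(\{e_n\}\), and it satisfies \eqref{eq:Riesz}. Since \eqref{eq:Riesz} is literally \eqref{eq:Rieszb}, the system is a Riesz sequence simply by dropping the completeness requirement. To see that it is also a frame, I would compute, for an arbitrary \(f \in \mathcal{H}\),
\[
\langle f, f_n\rangle = \langle f, Te_n\rangle = \langle T^* f, e_n\rangle,
\]
so that Parseval's identity for the orthonormal basis \(\{e_n\}\) gives \(\sum_n |\langle f, f_n\rangle|^2 = \|T^* f\|^2\). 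Because \(T\) is bounded and invertible, so is \(T^*\), and the two-sided estimate \(\|(T^*)^{-1}\|^{-1}\,\|f\| \le \|T^* f\| \le \|T^*\|\,\|f\|\) immediately yields the frame inequalities \eqref{eq:Riesz2}.

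For the reverse direction, suppose \(\{f_n\}\) is simultaneously a Riesz sequence and a frame. The only ingredient missing for a Riesz basis is completeness, and this is supplied by the lower frame bound: if \(f \in \mathcal{H}\) satisfies \(\langle f, f_n\rangle = 0\) for every \(n\), then \(\tfrac1K\|f\|^2 \le \sum_n |\langle f, f_n\rangle|^2 = 0\), forcing \(f = 0\). Hence no nonzero vector is orthogonal to all the \(f_n\), i.e. \(\{f_n\}\) is complete. A complete system satisfying \eqref{eq:Rieszb} is, by the characterization of Riesz bases recalled in the introduction, a Riesz basis, which finishes the argument.

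The whole proof is elementary, and the only step requiring genuine care is the forward passage from the Riesz-basis condition to the frame bounds: one must identify the synthesis operator with \(T\) and recognize that the frame operator equals \(TT^*\), so that its spectral bounds are controlled by \(\|T^*\|\) and \(\|(T^*)^{-1}\|\). Once this identification is made, both frame constants follow directly from the invertibility of \(T\), and no separate appeal to the dual system \(\{g_n\}\) is needed for the equivalence itself.
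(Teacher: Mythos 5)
Your proof is correct, and the forward direction is essentially identical to the paper's: both compute $\langle f, f_n\rangle = \langle T^*f, e_n\rangle$, apply Parseval to get $\sum_n|\langle f,f_n\rangle|^2 = \|T^*f\|^2$, and read off the frame bounds from the invertibility of $T^*$. The reverse direction, however, takes a genuinely different route. The paper constructs the synthesis operator $F\colon \ell^2 \to \mathcal{H}$, $F(\{a_n\}) = \sum_n a_n f_n$, notes that the Riesz-sequence bounds make $F$ bounded and bounded below (hence injective with closed range), uses the frame property only to get density of the span so that the range is all of $\mathcal{H}$, and concludes that $F$ is a bounded invertible operator sending the canonical basis $(\delta_n)$ of $\ell^2$ to $\{f_n\}$ --- a direct, self-contained verification of the operator definition of a Riesz basis. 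You instead extract completeness from the lower frame bound (if $\langle f, f_n\rangle = 0$ for all $n$ then $\frac1K\|f\|^2 \le 0$, so $f=0$; note this is the same use of the frame hypothesis as the paper's density step) and then invoke the equivalence ``complete $+$ inequality \eqref{eq:Riesz} $\Rightarrow$ Riesz basis'' cited in the introduction from Christensen's book. This is logically sound and shorter, but it outsources the substantive functional analysis: that cited equivalence is itself proved by essentially the synthesis-operator argument the paper writes out, so your proof is less self-contained --- arguably at odds with the paper's stated purpose of supplying a proof ``for the reader's convenience'' rather than deferring to the literature. Your closing remark that the frame operator equals $TT^*$ is correct but not needed for either argument.
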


\begin{proof}
Suppose first that \(\{f_n\}\) is a Riesz basis. By definition, there exists a bounded invertible operator \(T: \mathcal{H} \to \mathcal{H}\) such that \(f_n = T \phi_n\) for an orthonormal basis \(\{\phi_n\}\) of \( \mathcal{H}\). 
Then, for any sequence \(\{a_n\} \in \ell^2\), we have
\[
\left\| \sum_n a_n f_n \right\| = \left\| T\left( \sum_n a_n \phi_n \right) \right\| \approx \left\| \sum_n a_n \phi_n \right\| = \left( \sum_n |a_n|^2 \right)^{1/2},
\]
where``\(\approx\)" means that the two terms bound each other with multiplicative constants depending on \(\|T\|\) and \(\|T^{-1}\|\). This shows that \(\{f_n\}\) is a Riesz sequence.
On the other hand, for any \(f \in \mathcal{H}\), we have
\[
\langle f, f_n \rangle = \langle f, T \phi_n \rangle = \langle T^* f, \phi_n \rangle,
\]
where \(T^*\) is the adjoint of \(T\). Therefore,
\[
\sum_n |\langle f, f_n \rangle|^2 = \sum_n |\langle T^* f, \phi_n \rangle|^2 = \|T^* f\|^2.
\]
Since \(T^*\) is bounded and invertible, there exists a constant \(K > 0\) such that
\[
\frac{1}{K} \|f\|^2 \leq \|T^* f\|^2 \leq  K \|f\|^2,
\]
for all \(f \in \mathcal{H}\). Hence, \(\{f_n\}\)  is a  frame.

Conversely, suppose that \(\{f_n\}\) is both a Riesz sequence and a frame. Define the synthesis operator \(F: \ell^2 \to \mathcal{H}\) by
\[
F(\{a_n\}) = \sum a_n f_n.
\]
%The frame inequality implies that the associated analysis operator \(F^*: H \to \ell^2\), given by
%\[
%F^* f = (\langle f, f_n \rangle),
%\]
%is bounded and has closed range. Moreover, 
The Riesz sequence property \eqref{eq:Rieszb}  implies that \(F\) is % bounded below on \(\ell^2\), and hence \(F\) 
is injective and %. Because \(F\) is 
bounded, %injective, 
and hence has closed range.  It follows that \(F\) is a bounded bijection  from $\ell^2$ onto its range. Since \(\{f_n\}\) is a frame, the span of \(\{f_n\}\) is dense in \(H\), implying that the range of \(F\) is the whole space \(H\). Therefore, \(F\) is a bounded invertible operator from \(\ell^2\) onto \(H\).
Thus, \(\{f_n\}\) is the image of the canonical orthonormal basis \((\delta_n)\) of \(\ell^2\) under the bounded invertible operator \(F\), showing that \(\{f_n\}\) forms a Riesz basis.
\end{proof}

\subsection{Non-Archimedean locally compact Abelian groups}

\iffalse
We briefly review the basic concepts and results concerning non-Archimedean l.c.a.  groups, which will be frequently used throughout this paper.

A topological abelian group \( G \) is called a \emph{locally compact abelian group} if its topology is Hausdorff and every point has a compact neighborhood. A Polish group is said to be non-Archimedean if it admits a neighborhood basis at the identity consisting of open subgroups. 
\fi

Now we consider non-Archimedean l.c.a. groups. The non-Archimedean property results in a fundamentally different topological structure compared to that of real or complex Lie groups.

Here are several fundamental examples of non-Archimedean l.c.a groups:

\begin{enumerate}
    \item \textbf{The $p$-adic number field $\mathbb{Q}_p$}: The additive group $(\mathbb{Q}_p, +)$ is locally compact and totally disconnected. Its topology is defined via the $p$-adic norm, which is non-Archimedean.
    
    \item \textbf{The $p$-adic integers $\mathbb{Z}_p$}: This is the closed unit ball in $\mathbb{Q}_p$ under the $p$-adic norm, and forms a compact open subgroup of $\mathbb{Q}_p$. It is compact, totally disconnected, and a standard example of a non-Archimedean compact group.
    
    \item \textbf{Finite products of non-Archimedean groups}: For instance, $\mathbb{Z}_p^n$ is a locally compact abelian group with the product topology, and it inherits the total disconnectness, non-Archimedean structure from $\mathbb{Z}_p$.

    \item \textbf{Profinite groups as inverse limits}: Any profinite abelian group, such as $\varprojlim \mathbb{Z}/p^n\mathbb{Z}$, is a compact, totally disconnected group. These arise naturally as inverse limits of finite discrete groups and carry a canonical non-Archimedean topology.

    \item \textbf{Finite adeles} $\mathbb{A}_f$: The restricted direct product of $\mathbb{Q}_p$ over all primes $p$ with respect to $\mathbb{Z}_p$ is an important example arising in number theory, particularly in class field theory and automorphic representations (\cite{CF1967}).
\end{enumerate}

\subsection{The field of \( p \)-adic numbers}
Now we present 
the field \( \mathbb{Q}_p \) of $p$-adic numbers, viewed as an additive group, as a typical example of non-Archimedean l. c. a. groups.  Actually we will mainly work with the group \( \mathbb{Q}_p \), because similar arguments work for general
non-Archimedean l.c.a. groups.
 %To prove the main result, we first focus on the special case where the ambient group is the field of $p$-adic numbers $\mathbb{Q}_p$. 
 
Let us provide a brief overview of \( p \)-adic numbers.
 Let \( \mathbb{Q} \) denote the rational numbers and \( p \geq 2 \) a prime. Every nonzero rational number \( r \in \mathbb{Q} \) can be expressed as \( r = p^v \frac{a}{b} \), where \( v, a, b \in \mathbb{Z} \), \( \gcd(p, a) = 1 \) and \( \gcd(p, b) = 1 \). The \( p \)-adic absolute value of $r$ is defined as \( |r|_p = p^{-v_p(r)} \) for \( r \neq 0 \), with \( |0|_p = 0 \). The function $r\mapsto |r|_p$ satisfies:
\begin{itemize}
    \item[(i)] Non-negativity: \( |r|_p \geq 0 \), with equality only when \( r = 0 \);
    \item[(ii)] Multiplicativity: \( |rs|_p = |r|_p |s|_p \);
    \item[(iii)] Non-Archimedean property: \( |r + s|_p \leq \max\{ |r|_p, |s|_p \} \).
\end{itemize}
The field \( \mathbb{Q}_p \) of \( p \)-adic numbers is by definition the completion of \( \mathbb{Q} \) under \( |\cdot|_p \). The ring \( \mathbb{Z}_p \) of \( p \)-adic integers consists of elements 
of $\mathbb{Q}_p$ with \( |x|_p \leq 1 \). Any $p$-adic number \( x \in \mathbb{Q}_p \) admits its expansion
\begin{equation*}
    x = \sum_{n = v}^\infty x_n p^n \quad (v \in \mathbb{Z},\ x_n \in \{0,1,\ldots, p-1\},\ % \text{and } 
      x_v \neq 0),
\end{equation*}
where \( v_p(x) := v \) is called the \( p \)-valuation of \( x \). We denote  \( \{x\} = \sum_{n=v}^{-1} x_n p^n \), called the fractional part of $x$ and \( [x] = \sum_{n=0}^{\infty} x_n p^n \),  called the integral part of $x$.

The following function 
\[
\chi(x) = e^{2\pi i \{x\}}
\]
defines  non-trivial continuous additive character of the additive group \( \mathbb{Q}_p \) ( recall: \( \{x\} \) representing the fractional part of $x$). Notably, \( \chi \) is trivial on \( \mathbb{Z}_p \) but non-constant on 
the subgroup \( p^{-1}\mathbb{Z}_p \). The detailed definition of $\chi$, as a locally constant function,  is as follows
\begin{align}\label{one-in-unit-ball}
\chi(x) = e^{2\pi i k/p^n} \quad \text{for } x \in \frac{k}{p^n} + \mathbb{Z}_p \ \ (k, n \in \mathbb{Z}),
\end{align}
and $\chi$ shares the following property
\begin{align}\label{integral-chi}
\int_{p^{-n}\mathbb{Z}_p} \chi(x) \,dx = 0 \quad \text{for all } n\  {\color{blue} \geq 1}.
\end{align}

Every continuous character \( \chi_\xi \) of \( \mathbb{Q}_p \) can be obtained from $\chi$ via \( \chi_\xi(x) = \chi(\xi x) \). The correspondence \( \xi \mapsto \chi_{\xi} \) induces an isomorphism \( \widehat{\mathbb{Q}}_p \simeq \mathbb{Q}_p \), allowing identification of \( \xi \in \mathbb{Q}_p \) with \( \chi_\xi \in \widehat{\mathbb{Q}}_p \). For further details about the group  \( \mathbb{Q}_p \) and its dual group  \( \widehat{\mathbb{Q}}_p$, we refer to \cite{VVZ94}. For any subset $\Lambda\subset \Q_p$, we denote by  $E_\Lambda=\{\chi_{\lambda}: \lambda\in \Lambda\}$  the  corresponding system of characters or exponentials.

\medskip

\noindent Notation:
\\ \indent
$B(0, p^{n}): = p^{-n} \Zp$.  It is the (closed) ball centered at $0$ of radius $p^n$.

$B(x, p^{n}): = x + B(0, p^{n})$. %We also use it to denote balls in $\Qp$.

%$S(x, q^{n}): = B(x, q^{n}) \setminus B(x, q^{n-1}) $, the sphere  of radius $q^n$.

%$\mathcal{A}_n:$ the set of finite union of balls of radius $p^n$ ($n\in \mathbb{Z})$.

$\mathbf{1}_A:$ the indicator function of a set $A$.

$\mathbb{L}: %=\{\{x\}: x\in \Q_p\} 
=\{\{x\}: x\in \Q_p\}$, a complete set of representatives of the quotient group $\Q_p/\Z_p$.

$\mathbb{L}_n:=p^{-n}\mathbb{L}$, a complete set of representatives of the quotient group $\Q_p/p^{-n}\Z_p$.
\medskip

Let $\Omega \subset \mathbb{Q}_p$ be a bounded measurable set with $0<\mu(\Omega) <+\infty$, where $\mu$ is the Haar measure of $\mathbb{Q}_p$. Our aim is to study the existence or non-existence of Riesz basis 
of the form $E_\Lambda$
consisting of  characters for the space $L^2(\Omega,\mu)$.
The following affine stability tells us that, without loss of generality,  we can assume that $\Omega$ is contained in $\mathbb{Z}_p$.

\begin{proposition}[Affine Stability]\label{prop:affine}
Let $E(\Lambda) = \{\chi_\lambda\}_{\lambda \in \Lambda}$ be a Riesz basis for $L^2(\Omega)$. 
\begin{enumerate}
    \item Any translated set $\Omega+a$ $(a \in \mathbb{Q}_p)$ also  admits  $E(\Lambda)$  as  a  Riesz basis.
    
    \item  Any the dilated set $b\Omega$ $(b \in \mathbb{Q}_p^*)$   admits $E(b^{-1}\Lambda) = \{\chi_{b^{-1}\lambda}\}_{\lambda \in \Lambda}$  as a Riesz basis.
\end{enumerate}

\end{proposition}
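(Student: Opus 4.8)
The plan is to realize each affine transformation of $\Omega$ as a \emph{unitary} isomorphism between the corresponding $L^2$-spaces, and to observe that this isomorphism carries the characters $\chi_\lambda$ into characters of the relabelled frequencies, up to multiplication by a constant of modulus one (for translations) or by a fixed positive scalar (for dilations). Since the image of a Riesz basis under a unitary operator is again a Riesz basis, and since multiplying all basis vectors by such constants alters neither completeness nor the two-sided inequality \eqref{eq:Riesz} (it only changes the constant $K$), both assertions follow at once. Concretely, I would first record the elementary fact that if $\{f_n\}$ is a Riesz basis of a Hilbert space $\mathcal{H}$, if $U\colon \mathcal{H}\to\mathcal{K}$ is unitary, and if the scalars $c_n$ satisfy $0<\inf_n|c_n|\le\sup_n|c_n|<\infty$, then $\{c_n\,U f_n\}$ is a Riesz basis of $\mathcal{K}$. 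This is immediate from the characterization ``complete $+$ inequality \eqref{eq:Riesz}'': unitarity preserves both the norm $\|\sum a_n f_n\|$ and completeness, while the bounded scalars $c_n$ only rescale the bounds in \eqref{eq:Rieszb} (and keep the span unchanged because $c_n\neq 0$); alternatively one invokes Lemma~\ref{lem-RSF}.

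For part (1), I would introduce the translation operator $U_a\colon L^2(\Omega)\to L^2(\Omega+a)$ defined by $(U_af)(x)=f(x-a)$. Translation invariance of the Haar measure $\mu$ makes $U_a$ unitary. The key computation is that
\[
(U_a\chi_\lambda)(x)=\chi\big(\lambda(x-a)\big)=\chi(-\lambda a)\,\chi_\lambda(x),\qquad x\in\Omega+a,
\]
so $U_a\chi_\lambda=\chi(-\lambda a)\,\chi_\lambda$ on $\Omega+a$, with $|\chi(-\lambda a)|=1$. Applying the preliminary fact with $c_\lambda=\overline{\chi(-\lambda a)}$ converts $\{U_a\chi_\lambda\}$, which is a Riesz basis of $L^2(\Omega+a)$, back into $\{\chi_\lambda\}$, proving that $E(\Lambda)$ is a Riesz basis of $L^2(\Omega+a)$.

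For part (2), I would use the dilation operator $U_b\colon L^2(\Omega)\to L^2(b\Omega)$ given by $(U_bf)(x)=|b|_p^{-1/2}f(b^{-1}x)$; the factor $|b|_p^{-1/2}$ compensates the scaling $d(bx)=|b|_p\,dx$ of the Haar measure, so that a one-line change of variables shows $U_b$ is unitary. Here the characters transform as
\[
(U_b\chi_\lambda)(x)=|b|_p^{-1/2}\chi(\lambda b^{-1}x)=|b|_p^{-1/2}\chi_{b^{-1}\lambda}(x),\qquad x\in b\Omega,
\]
so $U_b\chi_\lambda=|b|_p^{-1/2}\chi_{b^{-1}\lambda}$ on $b\Omega$. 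Since $|b|_p^{-1/2}$ is a fixed positive constant (here $b\in\mathbb{Q}_p^*$), the preliminary fact with $c_\lambda\equiv|b|_p^{1/2}$ shows that $\{\chi_{b^{-1}\lambda}\}=E(b^{-1}\Lambda)$ is a Riesz basis of $L^2(b\Omega)$.

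I do not anticipate a genuine obstacle here; the only points requiring care are the verifications that $U_a$ and $U_b$ are truly unitary (a single change of variables in each case, using translation invariance of $\mu$ and its multiplicative scaling by $|b|_p$) and the bookkeeping that the scalars $\chi(-\lambda a)$ and $|b|_p^{-1/2}$ are uniformly bounded above and below away from zero, so that the Riesz-sequence and frame bounds are preserved. Both are routine consequences of $|\chi|\equiv 1$ and of the multiplicativity of the $p$-adic absolute value.
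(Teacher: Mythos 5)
Your proposal is correct and follows essentially the same route as the paper's own proof: the paper also uses the unitary translation operator $T_a f(x)=f(x-a)$ and the unitary dilation $D_b f(x)=|b|_p^{-1/2}f(b^{-1}x)$, together with the observations $T_a\chi_\lambda=\overline{\chi_\lambda(a)}\,\chi_\lambda$ and $D_b\chi_\lambda=|b|_p^{-1/2}\chi_{b^{-1}\lambda}$. The only difference is that you make explicit the (routine) auxiliary fact that unimodular or uniformly bounded nonzero scalar multiples of a Riesz basis remain a Riesz basis, which the paper leaves implicit.
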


\begin{proof} These are simple facts, easy to see.

(1)  This is just because the translation operator $T_a f(x) = f(x-a)$ is unitary on $L^2(\mathbb{Q}_p)$, mapping $L^2(\Omega)$ onto $L^2(\Omega+a)$, and for $\chi_\lambda \in E(\Lambda)$ we have
\[
T_a\chi_\lambda(x) = \chi_\lambda(x-a) = \chi_\lambda(x)\overline{\chi_\lambda(a)}.
\]
%The phase factor $\overline{\chi_\lambda(a)}$ preserves Riesz basis properties as it constitutes a unimodular multiplier sequence.

(2) The dilation operator \(D_b f(x) := |b|_p^{-1/2} f(b^{-1}x)\) defines a unitary map from \(L^2(\Omega)\) onto \(L^2(b\Omega)\). Under this map, each exponential function $\chi_\lambda$ is  transformed to 
\[
D_b \chi_\lambda(x) = |b|_p^{-1/2} \chi_{b^{-1}\lambda}(x),
\]
so the image of \(E(\Lambda)\) under \(D_b\) is (up to a constant multiple) the exponential system \(E(b^{-1}\Lambda)\). Therefore, \(E(b^{-1}\Lambda)\) forms a Riesz basis for \(L^2(b\Omega)\).
\end{proof}

\section{Proof of Theorem \ref{thm-main1} }\label{sect:3}

One of ingredients of our proof of Theorem \ref{thm-main1} is the existence of Riesz basis for finite set of a discrete group. 
We start with a proof of this fact.

\subsection{Riesz basis for finite subsets of a discrete Abelian group}
The following lemma shows  that  every finite subset of a discrete abelian group admits a Riesz basis of exponential functions.
\begin{lemma}\label{lem-discrete}
Let \( G \) be a discrete abelian group, and let \( C \subset G \) be a finite subset with cardinality \( |C| = n \). Then there exists a subset \( D \subset \widehat{G} \) with \( |D| = n \) 
which forms a Riesz basis for \( L^2(C) \).
\end{lemma}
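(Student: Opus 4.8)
The first thing I would exploit is that $L^2(C)$ is finite dimensional: with the counting (Haar) measure on the discrete group $G$, we have $L^2(C)\cong \mathbb{C}^n$. In a finite-dimensional Hilbert space \emph{every} vector-space basis is automatically a Riesz basis. Indeed, if $f_1,\dots,f_n$ is a basis of $L^2(C)$, the synthesis map $F\colon \mathbb{C}^n\to L^2(C)$, $(a_j)\mapsto \sum_j a_j f_j$, is an invertible linear operator; writing $\sigma_1\ge\cdots\ge\sigma_n>0$ for its singular values gives $\sigma_n^2\sum_j|a_j|^2\le \|\sum_j a_j f_j\|^2\le \sigma_1^2\sum_j|a_j|^2$, so \eqref{eq:Rieszb} holds with $K=\max(\sigma_1^2,\sigma_n^{-2})$, and completeness is automatic since $n$ independent vectors span the $n$-dimensional space. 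Thus the entire problem reduces to producing $n$ characters of $G$ whose restrictions to $C$ are \emph{linearly independent} as functions on $C$.

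Equivalently, I would show that the restriction vectors $\{(\chi(c))_{c\in C}:\chi\in\widehat{G}\}$ span all of $\mathbb{C}^{C}$, and then simply select $n$ of them forming a basis (so that $|D|=n$ as required). By finite-dimensional duality, these vectors span $\mathbb{C}^C$ if and only if no nonzero coefficient vector $(a_c)_{c\in C}$ annihilates all of them, i.e.
\[
\Big(\sum_{c\in C} a_c\,\chi(c)=0\ \text{ for every }\chi\in\widehat{G}\Big)\ \Longrightarrow\ a_c=0\ \text{ for all }c .
\]
This implication is the crux, and it is precisely a statement about the linear independence of the evaluation maps $\mathrm{ev}_c\colon\widehat{G}\to\mathbb{C}$, $\mathrm{ev}_c(\chi)=\chi(c)$.

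To establish it I would invoke two standard facts. First, by Pontryagin duality the canonical map $G\to\widehat{\widehat{G}}$ is injective, so for distinct $c\ne c'$ in $C$ there is a character $\chi$ with $\chi(c-c')\ne 1$; hence the maps $\mathrm{ev}_c$ ($c\in C$) are \emph{pairwise distinct} characters of $\widehat{G}$ with values in $\mathbb{C}^\times$. Second, by the Dedekind--Artin theorem on independence of characters, distinct group homomorphisms into the multiplicative group of a field are linearly independent over that field; applied to $\{\mathrm{ev}_c\}_{c\in C}$ this yields exactly the displayed implication. Since each selected character of $G$ already lies in $\widehat{G}$, this completes the construction.

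The main obstacle to guard against is the temptation to use mere separation of points: characters separating the points of $C$ only gives that the restriction map is injective on $G$, which is \emph{not} enough — one genuinely needs linear independence of the restricted functions, which is why the Dedekind--Artin step (and not just point separation) is the essential input. A self-contained alternative that avoids Pontryagin duality would be to note that $\langle C\rangle$ is a finitely generated, hence residually finite, abelian group, pass to a finite quotient $q$ that is injective on $C$, use that the character table of a finite abelian group is invertible (so any $n$ of its rows have full rank) to obtain $n$ characters of the quotient with independent restrictions to $q(C)$, and finally pull these back along $q$ and extend them to all of $G$ using the divisibility (equivalently, injectivity as a $\mathbb{Z}$-module) of $\mathbb{T}$.
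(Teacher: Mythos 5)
Your proof is correct, and its skeleton matches the paper's: identify $L^2(C)$ with $\mathbb{C}^n$, note that in a finite-dimensional Hilbert space every vector-space basis is automatically a Riesz basis, and thereby reduce the lemma to producing $n$ characters whose restrictions to $C$ are linearly independent. Where you genuinely differ is in the justification of the key spanning claim. The paper simply asserts that ``since $\widehat{G}$ separates points in $G$, the image space of $\Phi$ is of dimension $n$,'' with no further argument; you actually prove this, by observing that the evaluation maps $\mathrm{ev}_c\colon\widehat{G}\to\mathbb{C}^\times$ are pairwise distinct characters of the group $\widehat{G}$ (distinctness coming from point separation, i.e.\ injectivity of $G\to\widehat{\widehat{G}}$) and then invoking Dedekind--Artin linear independence of distinct characters. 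That is a complete and clean way to close the gap, and it is the main added value of your write-up. One caveat: your remark that point separation ``is not enough'' is too strong. It is enough, provided one uses the multiplicative structure of characters: the set of restrictions $\{\chi|_C : \chi\in\widehat{G}\}$ is closed under pointwise multiplication and contains the constant function $1$, so its linear span is a point-separating unital subalgebra of $\mathbb{C}^C$; for finite $C$, Lagrange-type interpolation within this algebra yields every indicator $\mathbf{1}_{\{c\}}$, hence the span is all of $\mathbb{C}^C$. This is presumably the argument the paper leaves implicit. Your criticism is valid in the narrower sense that for a bare family of vectors, with no algebra structure, separation of points does not imply spanning --- so some additional input (multiplicativity plus interpolation, or your Artin-independence argument) really is needed, and your proposal is the one that supplies it explicitly. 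Your alternative route via residual finiteness, finite quotients, and divisibility of $\mathbb{T}$ is also sound but unnecessary given either of the above arguments.
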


\begin{proof}
The Hilbert space \( L^2(C) \) consists of all complex-valued functions on $C$, and is naturally isomorphic to \( \mathbb{C}^n \) ($n$ being the cardinality of $C$), equipped with the standard inner product
\[
\langle f, g \rangle = \sum_{c \in C} f(c) \overline{g(c)}.
\]
Each character \( \chi \in \widehat{G} \)  restricted  to   \( C \)  gives up a vector \( \chi|_C \in \mathbb{C}^n \), namely an element of $L^2(C)$. Consider the evaluation map:
\[
\Phi : \widehat{G} \to \mathbb{C}^n, \quad \chi \mapsto \left( \chi(c_1), \dots, \chi(c_n) \right),
\]
where \(c_1, \dots, c_n \) are elements of $C$. Since \( \widehat{G} \) separates points in \( G \), the image space of \( \Phi \) is of dimension \( n \). Thus, there exist  \( n \) characters \( \chi_{d_1}, \dots, \chi_{d_n} \in \widehat{G} \) such that their restrictions \( \chi_{d_j}|_C \in \mathbb{C}^n \) are linearly independent.
Let \( D = \{\chi_{d_1}, \dots, \chi_{d_n}\} \).
%, and form the matrix
%\[
%M = \left( \chi_{d_j}(c_i) \right)_{1 \le i, j \le n}.
%\]
%Then \( M \) is invertible, and
 The system \(D \) is a basis of \( \mathbb{C}^n \cong L^2(C) \). But, in finite-dimensional Hilbert spaces, every basis is automatically a Riesz basis. This completes the proof.
\end{proof}

\subsection{Existence of  exponential Riesz bases  for compact open set in $\Q_p$}
Let \(\Omega \subset \mathbb{Q}_p\) be a compact open set. Through scaling and translation, we may assume \(\Omega \subset \mathbb{Z}_p\) (cf. Proposition~\ref{prop:affine}). Then the set \(\Omega\) can be written as a disjoint union of balls of equal radius:
\begin{align}\label{eq:compactopen}
\Omega = \bigsqcup_{c \in C} \left( c + p^\gamma \mathbb{Z}_p \right),
\end{align}
where \(\gamma \geq 0\) is an integer, and \(C \subset \{0, 1, \ldots, p^\gamma - 1\}\). Consider the set \(C\) as a subset of the finite cyclic group \(\mathbb{Z}/p^\gamma \mathbb{Z}\). 

The Pontryagin dual group of \(\mathbb{Z}/p^\gamma \mathbb{Z}\) is given by
\[
\Gamma_{\textcolor{red}\gamma} := \left\{ \frac{k}{p^\gamma} \in \mathbb{Q}/\mathbb{Z} \,:\, 0 \leq k \leq p^\gamma - 1 \right\},
\]
with the group operation defined as addition modulo 1. Namely, $(\mathbb{Z}/p^\gamma \mathbb{Z})\ {\widehat{}} =\Gamma_\gamma $. Indeed, each \(\xi \in \Gamma{\textcolor{red}\gamma}\) corresponds to an additive character \(\chi_\xi(x) = e^{2\pi i \xi x}\) of the group \(\mathbb{Z}/p^\gamma \mathbb{Z}\).
Notice that we have
$$
   \mathbb{Z}_p = \mathbb{Z} / p^\gamma \mathbb{Z} \oplus p^\gamma \mathbb{Z}_p.
$$
We can identity $ \Gamma_\gamma$ with  $p^{-\gamma}(\mathbb{Z} / p^\gamma \mathbb{Z})$ which is isomorphic to $p^{-\gamma}\mathbb{Z}_p /  \mathbb{Z}_p$.

By Lemma~\ref{lem-discrete}, for the subset \(C \subset \mathbb{Z}/p^\gamma \mathbb{Z}\), there exists a subset \(D \subset \Gamma{\textcolor{blue}\gamma}\) such that the exponential system \(E(D) = \{ \chi_d : d \in D \}\) forms a Riesz basis for \(L^2(C)\). Based on this fact and the fact that the group  \(p^\gamma \mathbb{Z}_p\)  has exponential orthonormal basis, we are going to prove the following Theorem \ref{thm:Qp-Riesz}.

 Recall that  
\[
\mathbb{L}_\gamma = \{0\} \cup \bigcup_{m=1}^{\infty} \left\{ \frac{k}{p^{\gamma + m}} : 1 \leq k \leq p^m,\; (k, p) = 1 \right\}.
\]
Consider  $D$ as a subset of $\Q_p$.
It follows that \( D + \mathbb{L}_\gamma \) is a direct sum in \( \mathbb{Q}_p \); that is, every element \( \lambda \in D + \mathbb{L}_\gamma \) admits a unique decomposition \( \lambda = \ell + d \) with \( \ell \in \mathbb{L}_\gamma \) and \( d \in D \). 

For such \( \lambda = \ell + d \), the associated character on \( \mathbb{Q}_p \) satisfies the identity
\begin{equation}\label{eq:CCC}
\chi_\lambda = \chi_{\ell + d} = \chi_\ell \cdot \chi_d,
\end{equation}
where $\chi_\ell$ and  $\chi_d$ are considered as characters on $\Q_p$.
Indeed, as  $\chi_d$ is locally constant on each coset of $p^\gamma \mathbb{Z}_p$,  it can also be viewed as a character on $\mathbb{Z}_p / p^\gamma \mathbb{Z}_p \cong \mathbb{Z} / p^\gamma \mathbb{Z}$.
If $x = a +b \in \mathbb{Q}_p$ with $a \in p^\gamma \mathbb{Z}_p$ and $b\in \mathbb{Q}_p/p^\gamma \mathbb{Z}_p$, the equality \eqref{eq:CCC} means
$$
    \chi_{\ell + d}(x) = \chi_\ell(x) \cdot \chi_d(b).
$$

%Remark that \[\mathbb{L}_\gamma=\{0\}\cup  \bigcup_{m=1}^{+\infty}\{\frac{k}{p^{\gamma+m}}:   1\leq k \leq p^{m},  (k,p)=1\}.\]
%Hence,  $D + \mathbb{L}$ forms a direct sum in $\Q_p$. For $\lambda=\ell+d$ with $\ell\in \mathbb{L}_\gamma$ and $d\in D$,  
%\[ \chi_{\lambda}=\chi_{\ell}\chi_{d}.\]
%{\color{red} Explain $\chi_\lambda$ with $\lambda\in D +\mathbb{L}_\gamma$, a function on $\Omega$. } 

\begin{theorem}[Exponential Riesz bases for compact open sets in \(\mathbb{Q}_p\)]\label{thm:Qp-Riesz}
Let \(\Omega \subset \mathbb{Z}_p\) be a compact open set with the decomposition
\begin{equation}\label{eq:compactopen}
\Omega = \bigsqcup_{c \in C} \left( c + p^\gamma \mathbb{Z}_p \right),
\end{equation}
where \(\gamma \geq 0\) is an integer and \(C \subset \mathbb{Z}/p^\gamma \mathbb{Z}\) is a finite set. Suppose \(D \subset {\textcolor{red}{\Gamma_\gamma}}\) is such that the exponential system \(E(D) := \{ \chi_d \}_{d \in D}\) forms a Riesz basis for \(L^2(C)\). Then the exponential system
\[
E(\Lambda) := \{ \chi_\lambda \}_{\lambda \in \Lambda}, \quad \Lambda := D + \mathbb{L}_\gamma,
\]
forms a Riesz basis for \(L^2(\Omega)\).
\end{theorem}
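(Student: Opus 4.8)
The plan is to verify, via Lemma~\ref{lem-RSF}, that $E(\Lambda)$ is simultaneously a Riesz sequence and a frame in $L^2(\Omega)$; both halves will be reduced to the two ingredients already available, namely the Riesz basis property of $E(D)$ in $L^2(C)$ and the orthogonality of the characters $\{\chi_\ell\}_{\ell\in\mathbb{L}_\gamma}$ on the ball $p^\gamma\mathbb{Z}_p$.

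First I would fix the linear-algebraic skeleton. Writing each $f\in L^2(\Omega)$ as the family $f_c(t):=f(c+t)$, $t\in p^\gamma\mathbb{Z}_p$, indexed by $c\in C$, gives an isometry $L^2(\Omega)\cong\bigoplus_{c\in C}L^2(p^\gamma\mathbb{Z}_p)$ with $\|f\|^2=\sum_{c\in C}\|f_c\|^2$. The two families of representatives have disjoint valuation ranges ($v_p(d)\ge-\gamma$ for $d\in\Gamma_\gamma$, while $v_p(\ell)\le-\gamma-1$ for $0\ne\ell\in\mathbb{L}_\gamma$), which is exactly what makes $D+\mathbb{L}_\gamma$ a direct sum. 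Using $\chi_d(t)=1$ for $t\in p^\gamma\mathbb{Z}_p$ (so $\chi_d$ is constant, equal to $\chi_d(c)$, on each coset $c+p^\gamma\mathbb{Z}_p$) together with multiplicativity of $\chi_\ell$, one gets the pointwise factorization
\[
\chi_{\ell+d}(c+t)=\chi_d(c)\,\chi_\ell(c)\,\chi_\ell(t),\qquad c\in C,\ t\in p^\gamma\mathbb{Z}_p .
\]
Here $\{\chi_\ell\}_{\ell\in\mathbb{L}_\gamma}$ is an orthogonal basis of $L^2(p^\gamma\mathbb{Z}_p)$ (the set $\mathbb{L}_\gamma$ being a complete set of coset representatives for the dual of $p^\gamma\mathbb{Z}_p$), so that $\sum_{\ell}|\langle f_c,\chi_\ell\rangle|^2=\mu(p^\gamma\mathbb{Z}_p)\,\|f_c\|^2$ for each $c$.

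The heart of the argument is that, after this factorization, the index $\ell$ decouples and the remaining interaction over $c\in C$ and $d\in D$ is literally the $L^2(C)$ problem. For the analysis (frame) bound, the factorization yields $\langle f,\chi_{\ell+d}\rangle_{L^2(\Omega)}=\langle g^{(\ell)},\chi_d\rangle_{L^2(C)}$, where $g^{(\ell)}=\big(\overline{\chi_\ell(c)}\,\langle f_c,\chi_\ell\rangle\big)_{c\in C}\in L^2(C)$; applying the frame bounds of $E(D)$ for each fixed $\ell$ and then Parseval in $\ell$ gives $\sum_{\lambda\in\Lambda}|\langle f,\chi_\lambda\rangle|^2\approx\mu(p^\gamma\mathbb{Z}_p)\,\|f\|_{L^2(\Omega)}^2$. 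Dually, for the synthesis (Riesz sequence) bound, expanding $g=\sum_\lambda a_\lambda\chi_\lambda$ on the coset of $c$ shows that its $\ell$-th Fourier coefficient is $\chi_\ell(c)\sum_{d\in D}a_{\ell+d}\chi_d(c)$, whence $\|g\|_{L^2(\Omega)}^2=\mu(p^\gamma\mathbb{Z}_p)\sum_\ell\big\|\sum_d a_{\ell+d}\chi_d\big\|_{L^2(C)}^2\approx\mu(p^\gamma\mathbb{Z}_p)\sum_\lambda|a_\lambda|^2$ by the Riesz sequence bounds of $E(D)$. Lemma~\ref{lem-RSF} then finishes the proof.

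The only point requiring a little care---the ``main obstacle'', though it is mild---is the cross term $\chi_\ell(c)$ coupling the coset label $c$ with the fine frequency $\ell$: it prevents the decomposition from being a genuine tensor product. The resolution is simply that $|\chi_\ell(c)|=1$, so this unimodular twist disappears from every squared $\ell^2$-norm and never affects the two-sided bounds; one must only keep honest track of the normalizing constant $\mu(p^\gamma\mathbb{Z}_p)=\|\chi_\ell\|^2_{L^2(p^\gamma\mathbb{Z}_p)}$, which is absorbed into the final Riesz constant.
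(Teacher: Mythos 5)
Your proposal is correct and follows essentially the same route as the paper: both verify, via Lemma~\ref{lem-RSF}, that $E(\Lambda)$ is a frame and a Riesz sequence by exploiting the factorization $\chi_{\ell+d}=\chi_\ell\chi_d$, the orthogonality of $\{\chi_\ell\}_{\ell\in\mathbb{L}_\gamma}$ on the cosets $c+p^\gamma\mathbb{Z}_p$, and the finite-dimensional Riesz basis property of $E(D)$ in $L^2(C)$ (which the paper phrases as invertibility of the character matrix $(\chi_{-d}(c))$). The only cosmetic difference is that the paper obtains the two upper bounds from global Parseval in $L^2(\mathbb{Z}_p)$ using $\Lambda\subset\mathbb{L}$, while you derive both bounds uniformly from the $L^2(C)$ bounds for $E(D)$; this changes nothing essential.
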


\begin{proof}
We employ Lemma \ref{lem-RSF} by verifying  \(E(\Lambda)\) is both a frame and Riesz sequence.

\medskip%\noindent
\textbf{$E(\Lambda)$ is a Frame.}  
Since \(\Omega \subset \mathbb{Z}_p\), any function \(f \in L^2(\Omega)\)
is extended to a function $f1_\Omega$ in $L^2(\mathbb{Z}_p)$. As \(\Lambda \subset \widehat{\mathbb{Z}}_p =\mathbb{L}\), according to Parseval's identity we have 
\[
\|f\|^2_{L^2(\Omega)} %= \int_{\Omega} |f|^2 d\mu
 = \int_{\mathbb{Z}_p} |f \cdot \mathbf{1}_{\Omega}|^2 d\mu = \sum_{\lambda \in \mathbb{L}}|\widehat{f\cdot \mathbf{1}_\Omega}(\lambda)|^2 \geq \sum_{\lambda\in \Lambda}|\langle f, \chi_{\lambda} \rangle|^2,
\]
%where the second equality is ensured by the Parseval identity. 
where $\langle \cdot, \cdot \rangle $ denotes the inner product  in $L^2(\Omega)$. 
To confirm the frame property, it remains to show
\begin{equation}\label{eq:remain}
\sum_{\lambda\in \Lambda}|\langle f, \chi_{\lambda} \rangle|^2 \geq \frac{1}{K} \|f\|^2_{L^2(\Omega)}
\end{equation}
for some constant $K>0$.
To this end, observe that 
 the system $E(\mathbb{L})$ forms an orthogonal basis for \(L^2(\mathbb{Z}_p)\).  Consequently, the system \(E(\mathbb{L}_{\gamma})\) forms an orthogonal basis 
for 
\(L^2(p^{\gamma}\mathbb{Z}_p)\) then for  \(L^2(c+p^{\gamma}\mathbb{Z}_p)\) for each \(c \in C\) (by a similar argument for proving Proposition \ref{prop:affine}). %, the system \(E(\mathbb{L}_{\gamma})\) forms an orthogonal basis for \(L^2(c+p^{\gamma}\mathbb{Z}_p)\). 
Thus, again by Parseval's identity, we have
$$
\int_{c+p^{\gamma} \mathbb{Z}_p} |f|^2 d\mu =  \sum_{\ell \in \mathbb{L}_{\gamma}} p^{\gamma} \left|\int_{c+p^{\gamma}\mathbb{Z}_p} f \cdot \chi_{-\ell} d\mu \right|^2.
$$
Summing over \(c \in C\) gives
\begin{equation}\label{eq:remain2}
\|f\|_{L^2(\Omega)}^2 = \sum_{c\in C} \int_{c+p^{\gamma} \mathbb{Z}_p} |f|^2 d\mu% \\
%&= \sum_{c\in C} \sum_{\lambda \in \mathbb{L}_{\gamma}} p^{\gamma} \left|\int_{c+p^{\gamma}\mathbb{Z}_p} f \cdot \chi_{-\lambda} d\mu \right|^2 \\
%
= \sum_{\ell \in \mathbb{L}_{\gamma}} \sum_{c\in C} p^{\gamma}\left|\int_{c+p^{\gamma}\mathbb{Z}_p} f \cdot \chi_{-\ell} d\mu \right|^2.
\end{equation}
Now, we need to bound from above the last sum.  
Notice that we have 
 the decomposition \(\Lambda = D + \mathbb{L}_\gamma\) and for $\lambda =\ell +d \in \Lambda$ we have % the fact that each \(\chi_d\) is constant on \(c + p^\gamma \mathbb{Z}_p\), we have
\[
\langle f, \chi_{\lambda} \rangle  = \langle f, \chi_{\ell + d} \rangle = \sum_{c\in C}  \int_{c + p^\gamma \mathbb{Z}_p} f \overline{\chi_\lambda}d\mu = \sum_{c \in C} \chi_{-d}(c) \int_{c + p^\gamma \mathbb{Z}_p} f \chi_{-\ell}\, d\mu,
\]
where we have used the relation \eqref{eq:CCC} and the fact that  \(\chi_d\) is constant on \(c + p^\gamma \mathbb{Z}_p\). 
Thus we get
\begin{equation}\label{eq:frame2}
\sum_{\lambda \in \Lambda} |\langle f, \chi_\lambda \rangle|^2 = \sum_{\ell \in \mathbb{L}_\gamma} \sum_{d \in D} \left| \sum_{c \in C} \chi_{-d}(c) \int_{c + p^\gamma \mathbb{Z}_p} f \chi_{-\ell}\, d\mu \right|^2.
\end{equation}
Since the character matrix % \(\mathbf{M} =
\( (\chi_{-d}(c))_{c \in C, d \in D}\) is invertible, there exists a constant \(K' > 0\) such that
\[
\sum_{d \in D} \left| \sum_{c \in C} \chi_{-d}(c) a_c \right|^2 \geq \frac{1}{K'} \sum_{c \in C} |a_c|^2 \quad \text{for all } (a_c) \in \mathbb{C}^{|C|}.
\]
Apply this to $a_c = \int_{c + p^\gamma \mathbb{Z}_p} f \chi_{-\lambda}\, d\mu$. Then \eqref{eq:frame2} implies 
\begin{equation*} %\label{eq:frame2}
\sum_{\lambda \in \Lambda} |\langle f, \chi_\lambda \rangle|^2 \ge \frac{1}{K'} \sum_{\ell \in \mathbb{L}_\gamma} \sum_{c \in C} \left|  \int_{c + p^\gamma \mathbb{Z}_p} f \chi_{-\ell}\, d\mu \right|^2.
\end{equation*}
This, together with \eqref{eq:remain2},  implies 
the desired frame estimate \eqref{eq:remain} with $K=K' p^\gamma$.

\medskip%\noindent
\textbf{$E(\Lambda)$ is a Riesz Sequence.}  
We now verify that % \( E(\Lambda) \) forms a Riesz sequence in \( L^2(\Omega) \), i.e., 
for any sequence \( \{a_\lambda\} \in \ell^2(\Lambda) \),  we have
\[
\frac{1}{K} \sum_{\lambda\in \Lambda} |a_{\lambda}|^2 \leq \left\| \sum_{\lambda \in \Lambda} a_{\lambda}\chi_{\lambda} \right\|_{L^2(\Omega)}^2 \leq K \sum_{\lambda\in \Lambda} |a_{\lambda}|^2.
\]
The second inequality follows  from Parseval's identity based on the assumptions \( \Omega \subset \mathbb{Z}_p \) and \( \Lambda \subset \mathbb{L} \).

In order to prove the first inequality,  we expand  the squared norm % through orthogonality, we have
    \begin{align*}
        \left\| \sum_{\lambda \in \Lambda} a_{\lambda}\chi_{\lambda} \right\|^2_{L^2(\Omega)} 
        %&= \left\langle \sum_{\lambda \in \Lambda} a_{\lambda}\chi_{\lambda}, \sum_{\lambda' \in \Lambda} a_{\lambda'}\chi_{\lambda'} \right\rangle \\
        = \sum_{\lambda,\lambda' \in \Lambda} a_\lambda \overline{a}_{\lambda'} \langle \chi_\lambda, \chi_{\lambda'} \rangle.
    \end{align*}
 Using the decomposition  \( \Lambda = \bigcup_{\ell \in \mathbb{L}_\gamma} (\ell + D)\), we further get 
%Then we have
    \begin{align}\label{eq:RR}
  \left\| \sum_{\lambda \in \Lambda} a_{\lambda}\chi_{\lambda} \right\|^2_{L^2(\Omega)}
        &= \sum_{\ell, \ell' \in \mathbb{L}_\gamma} \sum_{d,d' \in D} a_{\ell+d} \overline{a_{\ell'+d'}} \langle \chi_{\ell+d}, \chi_{\ell'+d'} \rangle.
    \end{align}
     But we have the orthogonality \[ \langle \chi_{\lambda}, \chi_{\lambda'} \rangle =  \int_\Omega  \chi_{\lambda-\lambda'} d\mu 
     =\sum_{c\in C} \int_{c +p^\gamma \mathbb{Z}_p}  \chi_{\lambda-\lambda'} d\mu = 0 \] 
     when \( |\lambda - \lambda'|_p \geq p^{-(\gamma+1)} \), according to \eqref{integral-chi}.
    In particular, for $\ell+d, \ell'+d'\in \Lambda$ with $\ell\neq \ell',$  we have  \[ |\ell+d-(\ell'+d')|_p=|\ell-\ell'|_p\geq p^{\gamma+1},\] and consequently 
    \( \langle \chi_{\ell+d}, \chi_{\ell'+d'}\rangle =0\).  
    
    %{\textcolor{red}{We also have \( \langle \chi_{\ell+d}, \chi_{\ell+d'}\rangle =0\) when $d\not=d'$.}}
    Therefore we can simplify \eqref{eq:RR} to 
    \begin{align}\label{eq:RRR}
        \left\| \sum_{\lambda \in \Lambda} a_{\lambda}\chi_{\lambda} \right\|^2_{L^2(\Omega)} 
        &= \sum_{\ell \in \mathbb{L}_\gamma}  \left \|\sum_{d \in D} a_{\ell+d}\chi_{\ell+d}\right\|^2_{L^2(\Omega)}.
    \end{align}
       Fix  \( \ell \in \mathbb{L}_\gamma \).  %the restricted subsystem \( \{\chi_{\ell+d}\}_{d \in D} \) on \( \Omega \) satisfies the norm decomposition
       As $\chi_{\ell+d}=\chi_\ell \chi_d$, we have
\begin{equation}\label{eq:R4}
\left\| \sum_{d \in D} a_{\ell+d}\chi_{\ell+d} \right\|^2_{L^2(\Omega)} 
= \left\| \chi_{\ell} \sum_{d \in D} a_{\ell+d}\chi_{d} \right\|^2_{L^2(\Omega)} 
= \sum_{c \in C} p^{-\gamma} \left| \sum_{d \in D} a_{\ell+d} \chi_d(c) \right|^2.
\end{equation}
For the second equality, we have used the fact that  \( \chi_d \) is constant on each coset \( c + p^\gamma\mathbb{Z}_p \).
%which follows from the constancy of \( \chi_d \) on each coset \( c + p^\gamma\mathbb{Z}_p \).
    However, as the matrix %Let \( \mathbf{M} =
 \( (\chi_{-d}(c))_{c \in C, d \in D} \) is invertible,  there exists a constant $K>0$ such that 
 %By the Riesz sequence property in finite dimensions again, we have
    \begin{equation}\label{eq:R5}
        \sum_{c \in C} p^{-\gamma} \left| \sum_{d \in D} a_{\ell+d} \chi_d(c) \right|^2 \geq \frac{1}{K} \sum_{d \in D} |a_{\ell+d}|^2.
    \end{equation}
   % where \( K \) depends only on \( C \) and \( D \).
    From \eqref{eq:RRR}, \eqref{eq:R4} and \eqref{eq:R5} we get
   % Summing over \( \mathbb{L}_\gamma \), we obtain that 
    \[
        \left\| \sum_{\lambda \in \Lambda} a_{\lambda}\chi_{\lambda} \right\|^2 \geq \frac{1}{K} \sum_{\ell \in \mathbb{L}_\gamma} \sum_{d \in D} |a_{\ell+d}|^2 = \frac{1}{K} \sum_{\lambda \in \Lambda} |a_\lambda|^2.
    \]
    This finishes the proof. 
%\medskip\noindent
%Combining the frame and Riesz sequence estimates, we conclude that \(E(\Lambda)\) forms a Riesz basis for \(L^2(\Omega)\).
\end{proof}

\subsection{ Existence of  exponential Riesz bases  for compact open set  in non-Archimedean l.c.a.  groups}

Let \(G\) be a non-Archimedean l.c.a.  group with  Haar measure \(\mu\). A key structural property of such groups is the existence of a base of neighborhoods of the identity consisting of compact open subgroups. This allows us to develop a multi-scale   analysis similar to that in \( \mathbb{Q}_p \), and to construct function systems with desirable spectral properties.

A compact open subsets  is of the form
\[
\Omega = \bigsqcup_{c \in C} (c + H),
\]
where \(H\) is a fixed compact open subgroup of \(G\), and \(C\subset G\) is a finite set of coset representatives of the discrete group $G/H$.

Let \(\widehat{G}\) denote the Pontryagin dual group of \(G\), consisting of all continuous homomorphisms \(\chi: G \to \mathbb{T}\). The annihilator of \(H\) in \(\widehat{G}\) defined  by
\[
H^\perp = \{ \chi \in \widehat{G} \,:\, \chi(h) = 1 \text{ for all } h \in H \}
\]
 is a compact open subgroup of \(\widehat{G}\). 
 The quotient \(\widehat{G}/H^\perp\) is a discrete group, and there exists a natural topological isomorphism
\[
\widehat{G/H} \cong H^\perp,
\]
as established in \cite{HR2}. This identification allows us to transfer the Riesz basis construction on \(L^2(C)\) to the exponential system on \(L^2(\Omega)\).

By Lemma~\ref{lem-discrete} and the identification \(\widehat{G/H} \cong H^\perp\), there exists a subset \(D \subset H^\perp\) of size \(|C|\) such that the exponential system
\[
E(D) := \{ \chi_d \}_{d \in D}
\]
forms a Riesz basis for \(L^2(C)\), where \(\chi_d\) is interpreted as a function constant on cosets \(c + H\) and indexed by \(d \in H^\perp\).

Let \(L \subset \widehat{G}\) be a complete set of coset representatives for the quotient \(\widehat{G}/H^\perp\). Define the frequency set
\[
\Lambda := D + L = \{ d + \ell : d \in D,\, \ell \in L \}.
\]
Then each \(\chi_{\lambda} \in E(\Lambda)\) with \(\lambda = d + \ell\) can be written as
\[
\chi_{d+\ell}(x) = \chi_d(x) \chi_\ell(x),
\]
where \(\chi_d\) is constant on cosets \(c + H\), and \(\chi_\ell\) modulates over distinct cosets. The set \(E(\Lambda)\) can thus be viewed as a union of translates of \(E(D)\) by  \(\ell \in L\).

Since characters in different cosets of \(H^\perp\) are orthogonal in \(L^2(\Omega)\), and since \(E(D)\) is a Riesz basis  of $L^2(C)$, the full system \(E(\Lambda)\) forms a Riesz basis  for  \(L^2(\Omega)\) by the same argument as in the \(p\)-adic case.

\section{Proof of  Proposition \ref{thm:criterion} and  Theorem \ref{thm-main2-general}}\label{sect:4}

\subsection{Nonexistence of exponential Riesz bases in $\Q_p$}
Without loss of generality, we can assume $\Omega \subset \mathbb{Z}_p$. 
It is clear that $\{p^m \Z_p\}_{m\geq 0}$  is  a neighborhood base of $0$. 
Any open set $\Omega\subset  \Z_p$ admitting the canonical decomposition
\begin{align}\label{eq:open}
\Omega = \bigsqcup_{n=1}^\infty x_n+ p^{m_n}\Z_p
\end{align}
with $x_n$'s and $m_n$'s satisfying  %properties:
\begin{itemize}
    \item $ x_n+ p^{m_n}\Z_p$  are disjoint,
      \item  \( m_n \geq m_{n+1} \),
    \item $x_n+ p^{m_n-1}\Z_p \not\subset \Omega \).
\end{itemize}
\begin{theorem}
Let  $\Omega$ be an open set in $\Z_p$ of the form \eqref{eq:open}. %Denote by \[\Omega_n=\bigcup_{k=1}^{\infty}x_{n+k}+p^{m_{n+k}}\Z_p.\]
Suppose 
\begin{equation}\label{eq:hypo}
\limsup_{n\to +\infty}  \mathcal{N}_{p^{m_n}\Z_p}(\Omega_{n})=+\infty.
\end{equation}
where \(\Omega_n=\bigcup_{k=1}^{\infty}x_{n+k}+p^{m_{n+k}}\Z_p\). 
Then $L^{2}(\Omega)$ does not admit  Riesz basis   composed of continuous additive characters.
\end{theorem}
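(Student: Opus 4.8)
The plan is to argue by contradiction, using \emph{only} the lower Riesz‑sequence bound together with the exact ultrametric invariance of the ``head'' of $\Omega$. Suppose $E(\Lambda)=\{\chi_\lambda\}_{\lambda\in\Lambda}$ were a Riesz basis for $L^2(\Omega)$ with constant $K$. Since $\chi_\lambda|_{\Zp}$ depends only on $\lambda$ modulo $\Zp$ and a Riesz basis is in particular minimal, I would first reduce each frequency modulo $\Zp$ and assume $\Lambda\subset\mathbb{L}=\widehat{\Zp}$; then $\{\chi_\lambda\}_{\lambda\in\Lambda}$ is an \emph{orthonormal} family in $L^2(\Zp)$, so every $(a_\lambda)\in\ell^2(\Lambda)$ yields a function $g:=\sum_\lambda a_\lambda\chi_\lambda\in L^2(\Zp)$ with $\|g\|_{L^2(\Zp)}^2=\sum_\lambda|a_\lambda|^2$ whose restriction to $\Omega$ is the corresponding $L^2(\Omega)$‑expansion. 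This two–sided dictionary between coefficient sequences and genuine functions on $\Zp$ is what will let translates of $\Omega$ enter the picture.

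The next step is to lift the lower Riesz bound to every translate of $\Omega$. For $t\in\Zp$, the substitution $y=x+t$ and additivity $\chi_\lambda(x+t)=\chi_\lambda(x)\chi_\lambda(t)$ give, for any such $g$,
\[
\|g\|_{L^2(\Omega+t)}^2=\int_\Omega\Big|\sum_\lambda a_\lambda\chi_\lambda(t)\,\chi_\lambda(x)\Big|^2\,d\mu(x)\ \ge\ \frac1K\sum_\lambda|a_\lambda|^2=\frac1K\|g\|_{L^2(\Zp)}^2,
\]
the inequality being the lower Riesz‑sequence bound in $L^2(\Omega)$ applied to the coefficients $a_\lambda\chi_\lambda(t)$ (of the same modulus). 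I would then fix $n$ along the subsequence on which $N:=\mathcal N_{B_n}(\Omega_n)$ is large, set $B_n=p^{m_n}\Zp$, and pick a translation set $\{t_0=0,t_1,\dots,t_{N-1}\}\subset B_n$ with the $\Omega_n+t_j$ pairwise disjoint. The structural input is that each of the finitely many balls constituting the head $H_n:=\Omega\setminus\Omega_n$ has radius at least $p^{-m_n}$, so $H_n$ is invariant under translation by $B_n$; hence $\Omega+t_j=H_n\sqcup(\Omega_n+t_j)$, and summing the indicators,
\[
\sum_{j=0}^{N-1}\mathbf 1_{\Omega+t_j}=N\,\mathbf 1_{H_n}+\mathbf 1_{P},\qquad P:=\bigsqcup_{j=0}^{N-1}(\Omega_n+t_j)\subset\Zp .
\]

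The crux is to annihilate the term carrying the dangerous multiplicity $N$. For this I would take $(a_\lambda)$ to be the Riesz‑basis coefficients of $\mathbf 1_{\Omega_n}\in L^2(\Omega)$, so the associated $g\in L^2(\Zp)$ satisfies $g|_\Omega=\mathbf 1_{\Omega_n}$ and therefore $g\equiv 0$ on $H_n$, while $\|g\|_{L^2(\Zp)}^2>0$. Summing the displayed translated lower bound over $j$ and inserting the multiplicity identity gives
\[
\frac NK\,\|g\|_{L^2(\Zp)}^2\le\sum_{j=0}^{N-1}\|g\|_{L^2(\Omega+t_j)}^2=\int_{\Zp}|g|^2\big(N\mathbf 1_{H_n}+\mathbf 1_P\big)\,d\mu=\int_P|g|^2\,d\mu\le\|g\|_{L^2(\Zp)}^2,
\]
since the $N\mathbf 1_{H_n}$ contribution vanishes. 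Dividing by $\|g\|_{L^2(\Zp)}^2>0$ forces $N\le K$, contradicting $\limsup_n\mathcal N_{B_n}(\Omega_n)=+\infty$. I expect the only genuinely delicate points to be the reduction $\Lambda\subset\mathbb{L}$ (which produces the clean orthonormal system and the coefficient–function dictionary in $L^2(\Zp)$) and the observation that representing \emph{exactly} $\mathbf 1_{\Omega_n}$ makes the head drop out; the exact identity $H_n+B_n=H_n$, available because $B_n$ is a compact open subgroup, is precisely what makes this cancellation clean and the whole argument simpler than its Euclidean counterpart, where one can only hope for an approximate invariance.
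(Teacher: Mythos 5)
Your proof is correct and takes essentially the same route as the paper's: reduce $\Lambda$ modulo $\Z_p$ so that $E(\Lambda)$ is orthonormal in $L^2(\Z_p)$, extend the Riesz expansion of (a multiple of) $\mathbf{1}_{\Omega_n}$ to a function on $\Z_p$, use that the modulated coefficients $a_\lambda\chi_\lambda(t)$ have the same $\ell^2$ norm to push the lower Riesz bound onto every translate $\Omega+t$ with $t\in p^{m_n}\Z_p$, exploit the exact invariance of the head under such $t$ together with the vanishing of the extended function there, and sum over a maximal disjoint family of translates of $\Omega_n$ to contradict the finiteness of the $L^2(\Z_p)$ norm. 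The only cosmetic difference is bookkeeping: you keep $\mathbf{1}_{\Omega_n}$ unnormalized and conclude the uniform bound $\mathcal{N}_{p^{m_n}\Z_p}(\Omega_n)\le K$, whereas the paper normalizes and obtains $\mathcal{N}_{p^{m_n}\Z_p}(\Omega_n)\le K^3$ via divergence of $\|\widetilde{f_n}\|_{L^2(\Z_p)}^2$; both contradict the hypothesis in the same way.
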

\begin{proof}
We are going to show the conclusion by contradiction.
%, that there is no exponential Riesz basis in $L^2(S)$.
Suppose that $E(\Lambda)$ is a Riesz basis for $L^2(\Omega)$ with constant $K$ in \eqref{eq:Riesz}. 
Recall that $\chi_\lambda(x) =e^{2\pi i\{\lambda x\}} $.
Observe that 
$$
|\lambda - \lambda'|_p \leq 1\Longrightarrow \forall  x \in \mathbb{Z}_p,  \chi_\lambda(x) = \chi_{\lambda'}(x).
$$
Thus, we may assume 
\[\Lambda \subset \widehat{\mathbb{Z}}_p=\mathbb{L} =\{\frac{k}{p^m}: m \ge 1, (k, p)=1\}.\]  

Let us consider the $L^2$-normalized  indicator functions 
\[
f_n(x) = \frac{1}{\sqrt{\mu(\Omega_n)}} \mathbf{1}_{\Omega_n}(x), \quad x \in \Omega.
\]
%These satisfy $\|f_n\|_{L^2(\Omega)} = 1$. 
Since $E(\Lambda)$ is a Riesz basis of $L^2(\Omega)$, each function $f_n\in L^2(\Omega)$ can be expanded into  
\begin{align}\label{eq:intzp0}
f_n(x) = \sum_{\lambda \in \Lambda} c_\lambda \chi_\lambda(x)
\ \ \ 
{\rm with} \ \ \  \frac{1}{K} \leq \sum_{\lambda \in \Lambda} |c_\lambda|^2 \leq K.
\end{align} 
Since the sequence  $(c_\lambda)$ is square summable, the following orthogonal expansion in $L^2(\mathbb{Z}_p)$ well defines a function  $\widetilde{f} \in L^2(\mathbb{Z}_p):$ 
\begin{align}\label{eq:intzp}
\widetilde{f_n}(x) := \sum_{\lambda \in \Lambda} c_\lambda \chi_\lambda(x) \quad {\rm with} \ \ \ 
\|\widetilde{f_n}\|_{L^2(\mathbb{Z}_p)}^2 = \sum_{\lambda \in \Lambda} |c_\lambda|^2<+\infty.
\end{align}
The function $\widetilde{f_n}$  is an extension of $f_n$ from $\Omega$ onto $\mathbb{Z}_p$.

Fix $t \in \mathbb{Z}_p$ and consider the translation $x \mapsto \widetilde{f_n}(x + t)$, which is still a function in $L^2(\mathbb{Z}_p)$. From \eqref{eq:intzp} we get 
\[
\widetilde{f}(x+t) = \sum_{\lambda \in \Lambda} c_\lambda \chi_\lambda(t) \chi_\lambda(x).
\]
This series converges in $L^2(\mathbb{Z}_p)$, a fortiori in $L^2(\Omega)$.  So, restricted on $\Omega$, the last series is the expansion  of $\widetilde{f}(x+t) 1_\Omega(x)$ in the Riesz basis.
As $|\chi_\lambda(t)| = 1$, we have    
\[
\|\widetilde{f}(\cdot+t) \|_{L^2(\Omega)}^2 \geq \frac{1}{K} \sum_{\lambda \in \Lambda} |c_\lambda|^2 \geq \frac{1}{K^2},
\]
where the first inequality follows from the definition condition \eqref{eq:Riesz} and the second one from \eqref{eq:intzp0}.
Now making the change of variable $x\mapsto x+t$ leads to
\begin{equation}\label{eq:Oy}
\|\widetilde{f_n}\|_{L^2(\Omega + t)}^2 \geq \frac{1}{K^2}.
\end{equation}
If we assume  $t \in  p^{m_n}\Z_p$, we have
%Using the canonical decomposition from \eqref{eq:compactopen},  
\[
\Omega + t = \left( \bigsqcup_{k=1}^n x_k+ p^{m_k}\Z_p \right) \sqcup (\Omega_n + t).
\]
As  the function $f_n$ vanishes on  $\bigcup_{k=1}^n x_k+ p^{m_k}\Z_p$, so does its extension $\widetilde{f}_n$. 
Then, from \eqref{eq:Oy}, we get
\[
\int_{\Omega_n + t} |\widetilde{f_n}(x)|^2 d\mu(x) \geq \frac{1}{K^2}, \quad \forall t \in  p^{m_{n}}\Z_p.
\]
Now take a $m_n$-translation set $T$ of $\Omega_n$ of maximal cardinality $\mathcal{N}_{p^{m_{n}}\Z_p}(\Omega_n)$. 
Then  
\[
\int_{\mathbb{Z}_p} |\widetilde{f_n}(x)|^2 d\mu(x) \geq \sum_{t\in T} \int_{\Omega_n +t}  |\widetilde{f_n}(x)|^2 d\mu(x)  \geq \frac{\mathcal{N}_{m_n}(\Omega_n)}{K^2}.
\]
This together with the assumption \eqref{eq:hypo} %\[\limsup_{n\to  \infty}\mathcal{N}_{p^{m_{n}}\Z_p}(\Omega_n)=+\infty,\] 
forces  
\[
\int_{\mathbb{Z}_p} |\widetilde{f_n}(x)|^2 d\mu(x) = +\infty,
\]
a contradiction to  \eqref{eq:intzp}.
\end{proof}

\subsection{Nonexistence of exponential Riesz bases in general non-Archimedean l.c.a. groups}
We now establish a general criterion that ensures the nonexistence of exponential Riesz bases in \( L^2(\Omega) \) for certain bounded open subsets of  $G$.

\begin{proof}[Proof of Proposition  \ref{thm:criterion} ]
 Fix a compact open subgroup \( U \subset G \) with normalized Haar measure \( \mu(U) = 1 \). 
Without loss of generality, we assume \( \Omega \subset U \), so that \( x_n \in U \) for all \( n \).

Suppose on the contrary that \( E(\Lambda) = \{\chi_\lambda\}_{\lambda \in \Lambda} \subset \widehat{G} \) is a Riesz basis for \( L^2(\Omega) \) with bound \( K \) as in~\eqref{eq:Riesz}. By the local constancy of characters on compact open subgroups, we may assume \( \Lambda \subset \bigcup_{n\ge 1} B_n^\perp \), where \( B_n^\perp := \{ \chi \in \widehat{G} : \chi|_{B_n} \equiv 1 \} \).

Define
\[
f_n(x) := \frac{1}{\sqrt{\mu(\Omega_n)}} \mathbf{1}_{\Omega_n}(x), \quad x \in \Omega.
\]
Then by assumption,
\begin{equation}\label{eq:Riesz-general}
f_n(x) = \sum_{\lambda \in \Lambda} c_\lambda \chi_\lambda(x), \qquad \frac{1}{K} \le \sum_{\lambda \in \Lambda} |c_\lambda|^2 \le K.
\end{equation}

Extend this function to all of \( U \) by defining
\[
\widetilde{f_n}(x) := \sum_{\lambda \in \Lambda} c_\lambda \chi_\lambda(x), \quad x \in U,
\]
which converges in \( L^2(U) \), since the coefficients are square summable.

Now for any \( t \in U \), by the same argument as in the $p$-adic setting, we obtain that 
\[
\|\widetilde{f_n}(\cdot + t)\|_{L^2(\Omega)}^2 \ge \frac{1}{K^2}.
\]
Now making the change of variable $x\mapsto x+t$ leads to

\[
\|\widetilde{f_n}\|_{L^2(\Omega + t)}^2 \ge \frac{1}{K^2}, \quad \forall t \in U.
\]
Observe that \( \widetilde{f_n} \equiv 0 \) on \( \bigcup_{k=1}^n x_k+ B_{k} \). Hence,
\[
\int_{\Omega_n + t} |\widetilde{f_n}(x)|^2 \, d\mu(x) \ge \frac{1}{K^2}, \quad \forall t \in U_{m_n}.
\]

Now let \( T \subset B_{n} \) be a maximal translation set of \( \Omega_n \), so that \( \#T = \mathcal{N}_{B_{n}}(\Omega_n) \). Then, we have
\[
\|\widetilde{f_n}\|_{L^2(U)}^2 \ge \sum_{t \in T} \int_{\Omega_n + t} |\widetilde{f_n}(x)|^2 \, d\mu(x) \ge \frac{\mathcal{N}_{B_{n}}(\Omega_n)}{K^2}.
\]
Thus,
\[
\limsup_{n \to \infty} \|\widetilde{f_n}\|_{L^2(U)}^2 =+ \infty,
\]
contradicting the convergence of the expansion of \( \widetilde{f_n} \in L^2(U) \). \end{proof}

We now construct an explicit example of a bounded open set in a general non-Archimedean, non-discrete l.c.a. group that admits no exponential Riesz basis. This concrete example illustrates the applicability of Proposition~\ref{thm:criterion}.

\begin{proof}[Proof of Theorem \ref{thm-main2-general}]
Since $G$ is non-discrete, there exists   a nested compact open subgroups \( \{B_n\}_{n \ge 0} \subset G \) such that  
\begin{align}\label{limsup}
\lim_{n\to +\infty}   \# (B_{n}/ B_{n+1})=+\infty.
\end{align}

We choose a sequence of point $\{x_n\}_{n\geq 1}$ in $G$ such that 
$x_n \in  B_{n-1}\setminus B_{n}$ and define an open set  \[\Omega=\bigcup_{n=1}^{\infty} x_n+B_n.\]
Let
\[\Omega_n=\bigcup_{k=n+1}^{\infty} x_k+B_k.\]  
Noting that $\Omega_{n} \subset B_n$ for each $n\geq1$. Hence, we obtain that \[ \Omega_{n}  \subset 
(x_{n+1}+ B_{n+1}) \cup B_{n+1}.\]
By  \eqref{limsup}, we have 
\begin{align*}
\lim_{n\to +\infty}   \mathcal{N}_{B_{n}}(\Omega_n)=+\infty. 
\end{align*}
By Proposition \ref{thm:criterion}, there does not exist any subset \( \Lambda \subset \widehat{G} \) which forms a Riesz basis for \( L^2(\Omega) \).
\end{proof}

\bibliographystyle{siam}

%\bibliography{Ref}

\end{document}